\title{Figure's annotation}
\newtheorem{theorem}{Theorem}[section]
\newtheorem{remark}{Remark}[section]
\newtheorem{definition}{Definition}[section]
\newtheorem{lemma}[theorem]{Lemma}
\newtheorem {proposition}[theorem]{Proposition}
\newcommand{\RR}{\mathbb{R}}
\newcommand{\Dt}{\dfrac{d}{dt}}
\newcommand{\sign}{sign}
\newcommand{\intO}{\int_{\Omega}}
\newcommand{\intTO}{\int_{0}^{T}\int_{\Omega}}
\newcommand{\intT}{\int_{0}^{T}}
\DeclareMathOperator{\coker}{Coker}
\DeclareMathOperator{\curl}{curl}
\DeclareMathOperator{\divv}{div}
\newcommand{\Ss}{\mathbf{S}}
\newcommand{\vrho}{\varrho}
\newcommand{\go}{\rightarrow}
\newcommand{\uu}{\mathbf{u}}
\DeclareMathOperator{\D}{D}
\newcommand{\del}{\partial}
\def\Xint#1{\mathchoice
{\XXint\displaystyle\textstyle{#1}}%
{\XXint\textstyle\scriptstyle{#1}}%
{\XXint\scriptstyle\scriptscriptstyle{#1}}%
{\XXint\scriptscriptstyle%
\scriptscriptstyle{#1}}%
\!\int}
\def\XXint#1#2#3{{\setbox0=\hbox{$#1{#2#3}{%
\int}$ }
\vcenter{\hbox{$#2#3$ }}\kern-.6\wd0}}
\def\dashint{\Xint-}
\def\edc{\end{document}}
\numberwithin{equation}{section}
\date{\today}
\title{Well-posedness for a class of compressible non-Newtonian fluids  equations}
\author{Bilal Al Taki$^{1,2}$}
\begin{document}
\maketitle


\normalsize
\begin{center}
{\small 1. Sorbonne Université, CNRS, Laboratoire Jacques-Louis Lions, 75005 Paris, France. 
\\
2. Peking University, Beijing International Center for Mathematical Research, 100871 Beijing,  China.\\
email: bilal.al\_taki@sorbonne-universite.fr}
\end{center}

\begin{abstract}
The purpose of this paper is to deal with the issue of well-posedness for a class of non-Newtonian fluid dynamics equations. These sets of equations are commonly used to describe various complex models that appear in nature, industry, and biology. 
The equations describing the motion of such fluids are characterized by a non-linear constitutive law relating the state of stress to the rate of deformation. We show the local-in-time existence and uniqueness of strong solutions to two important models: the Power Law model and the Bingham model. While our result for the first model holds over a periodic domain $\Omega=\mathbb{R}^3,$ the result obtained on the second model is limited to the one-dimensional case. This is because Bingham's constitutive law is discontinuous due to phase transition that may appear during the time when flows change nature, particularly from liquid motion to rigid motion and vice-versa. This property reduces the probability of showing smooth solutions to such a system in higher dimension space. 
\end{abstract}

\maketitle
\textbf{Keywords}. Power Law model, Bingham fluid, Non-Newtonian fluids, Nonlinear elliptic equations.

\textbf{AMS subject classification}: 35Q30, 76N10, 35B65, 35D35.

\section{Introduction}
Non-Newtonian fluids are characterized by different features, such as viscosity, elasticity, or memory effects. These fluids exhibit either shear thinning or shear thickening behavior and in some cases, exhibit a yield stress, i.e., a stress level which must be overcome before the fluid begins to flow. There are a number of common rheology models or methods often used to characterize non-Newtonian fluids. These include the Herschel-Bulkley, Power Law, and Bingham Plastic models.
 At the continuum level, the mathematical model describing the motion of fluids when changes in temperature are not taken into account is usually given by the following system
\begin{equation}\label{depart}
\begin{array}{ccc}
\del_{t}\rho+\divv(\rho \uu)=0\vspace*{0.2cm}\\
\del_{t}(\rho \uu)+\divv(\rho \uu\otimes \uu)-\divv \Ss +\nabla p=\rho f,
\end{array}
\end{equation}
where $\uu$ is the velocity, $\rho$ is the density, $p$ is the pressure which is assumed here to be a given function of $\rho$, i.e. $p=a\rho^\gamma$ where $a>0, \gamma>1$. $\Ss=(\Ss_{ij})$ represents the stress tensor, $f$ is the external body force. In order to characterize the specific fluid under consideration we need the so-called constitutive law which relate the stress tensor $\Ss$ with the shear rate $\D(\uu)=\frac{1}{2}(\nabla \uu+\nabla^t \uu)$. Because of their variety (elasticity, plasticity, normal stresses, memory, etc), we cannot expect that a single expression for $\Ss$ could be exist, describing all these types of fluids. Therefore, different models were developed in the literature to describe various effects. At this stage, the reader is referred to \cite{Ma-Ra-law} where the authors investigated the development of constitutive relations for compressible fluids within the thermodynamic framework.

\medskip

 Accordingly, the modeling and analysis of non-Newtonian fluids remain a mathematical and numerical challenge in a variety of industrial and natural applications. This is greatly due to the complex physical phenomena involved and the computational cost associated with the simulations of such flows. As a consequence, few results have been shown on the mathematical analysis of non-Newtonian fluids compared to Newtonian ones. Furthermore, we often consider fluids of Newtonian type instead of non-Newtonian one when studying the issue of well-posedness of some mathematical models arising from fluid mechanics, and that is for the sake of simplicity of analysis. For example, when modeling blood flow at the macroscopic level, researchers consider, in most cases, the blood as a Newtonian fluid. However, under certain experimental or physiological conditions, particularly at low shear rates, blood exhibits relevant non-Newtonian characteristics, and more complex constitutive models need to be used, see for instance \cite{MR3793259}. 

\medskip

  The first mathematical investigations of non-Newtonian fluid dynamics system go back to Ladyzhenskaya \cite{Lady-book}. Taking the following choice of constitutive law
\begin{equation}\label{S-Lad}
  \Ss=\mu |\D(\uu)|^{q-2}\D(\uu),   \quad \mu>0,
\end{equation}
the author developed the existence theory for weak solutions to non-stationary incompressible models with the aid of monotone operator theory, however, her result requires the following restriction $q\geq \frac{3d+2}{d+2}$ ($d$ is the dimension of space under consideration).  Since then, there has been significant progress in the mathematical theory of non-Newtonian fluids. To obtain the compactness of the convective term, some useful approximation techniques were used such as $L^{\infty}$  and Lipschitz truncations, and thus the existence of weak solutions with a wide range of the (constant) power-law index $q>\frac{2d}{d+2}$ was established, see \cite{Fr-Ma-St, Di-Ma-steady} in the case of steady flow, and \cite{Ma-Ne-uns, Di-Wo-unsteady} for unsteady flow. We refer the interested reader to \cite{malek-book} for an overview and a discussion of the relevant results. On the other hand, studying the regularity of weak solutions of such a system is quite challenging, and it has been the main subject of many research papers. Of course, this is due to the non-linearity effect that appeared in the constitutive law \eqref{S-Lad}. This issue becomes more challenging when the fluid has the ability to sustain local stresses up to certain yield stress $\tau^*$ and behaves like a generalized Newtonian fluid otherwise (case of Bingham fluid, see \eqref{S-bing} below). As this subject is of crucial importance for our study, and in order to keep our introduction as simple as possible, we leave the discussion on this point to Section \ref{Sec-NES}.
\medskip

To the best of our knowledge, there are very few mathematical studies on compressible non-Newtonian fluids dynamics equations. Looking for global weak solutions {\it \`a la Leray} to compressible non-Newtonian fluids dynamics equations with appropriate boundary and initial conditions is far from being well studied. The main difficulty is to get rid of nonlinear terms when passing to the limit in the approximate solution, which is caused by the lack of compactness  of the density and the velocity. For this reason, some weaker notions of solutions have been proposed for compressible non-Newtonian systems. In \cite{Ma-Ne-uns}, and using considerations from J. Ne\v{c}as, A. Novotn\'y and M. \v{S}ilhav\'y \cite{Nec-nov}, the authors studied measure-valued solutions for the same class of constitutive law  \eqref{S-Lad} treated by Ladyzhenskaya. A. Abbatiello and E. Feireisl in \cite{Abb-Feir} developed the notion of dissipative solutions for a more general class of generalized Newtonian fluids inspired by what P.-L. Lions has proposed for incompressible Euler system in dimension greater than or equal to three. Our objective at this stage is twofold. First, we prove a local well-posedness result to the so-called Power Law model. Precisely, we show the local existence and uniqueness of strong solutions to the following system
\begin{equation}\label{Power-Law}
\begin{array}{ccc}
\del_{t}\rho+\divv(\rho \uu)=0\vspace*{0.2cm}\\
\del_{t}(\rho \uu)+\divv(\rho \uu\otimes u)-\divv \Ss_\delta +\nabla p=\rho f
\end{array}
\end{equation}
where $\Ss_\delta$ being the stress tensor given by
\begin{equation}\label{Sdelta-PL}
\Ss_\delta=
2\mu \D(\uu)+\lambda \divv \uu\,\mathbb{I}+\tau^*(|\D(\uu)|^2 +\delta^2)^{\frac{q-2}{2}} \D(\uu),
\end{equation}
where $\mu$ and $\lambda$ are the so-called Lam\'e viscosity coefficients assumed here constants, $q$ is a number belongs to $[1,\infty[$, $\tau^*$ is the yield stress also assumed to be a constant, and $|\D(\uu)|^2$ is the Hilbert-Schmidt norm defined by (also called Frobenius norm)
$$|\D(\uu)|^2=\underset{i,j=1}{\overset{d}{\sum}}\D_{ij}^2\quad \quad \D(\uu)=(\D_{ij})_{1\leq i,j\leq d}.$$
The Power Law model provides an alternative to the Bingham plastic model for concentrated non-settling slurries. The Power Law model also describes the flow behavior of many polymer solutions although the relationship between the friction factor and the Reynolds number differs from that required to describe the behavior of concentrated slurries. On the other hand, this model constitutes a good approximation of the compressible Bingham system given below, whence its importance follows.
\medskip

Our method of proof builds up the framework developed by H. J. Choe and H. Kim for compressible Navier-Stokes in the case of Newtonian fluids \cite{Kim-SNS}. Compared to \cite{Kim-SNS}, one of the main difficulties in our study is to establish the elliptic regularity ($L^p-$estimates, $p\in (1,+\infty)$) for the following elliptic system with $(\delta,q)-$structure
\[
-\divv(\Ss_\delta)=f,
\]
over a periodic domain $\Omega$, and a compatibility condition on $f$. For Newtonian fluids, i.e. when $\tau^*=0$ in \eqref{Sdelta-PL}, the elliptic regularity of the above system follows from the well-known results  \cite{ag-do-ni2, Sol1}. Whereas for non-Newtonian fluids, the establishing of the $L^p$ regularity is highly non-trivial, and it relies on several parameters characterizing the nature of the fluid under consideration, see for instance Section \ref{Sec-NES}. The main task of this paper was to show the $L^p-$estimates for $\mu, \lambda, \tau^*, \delta$ and $q$ such that
\[
\mu>0 \quad 2\mu +\lambda>0 \quad \tau^*\geq 0 \quad \delta\geq 0 \quad q\geq 1.
\]
Unfortunately, the condition $\delta\geq c>0$ seems, at least using our method of proof, to be a necessary condition to establish the $L^p-$estimates ($p\neq 2$) when $q=1$ in dimension greater than or equal to two\footnote{For $p=2$, i.e. the $H^{2}-$estimate holds even if $\delta=0.$ See Proposition \ref{prop-reg}.} (more details are furnished in the next section). This restriction prevents us to study the limit of the sequence $(\rho_\delta, \uu_\delta)$ solution to \eqref{Power-Law}-\eqref{Sdelta-PL} for $q=1$ when $\delta$ tends to zero, and hence obtain a well-posedness result of the limit system in the realistic three-dimensional space. In one dimension space, this condition could be ignored as we will see and we obtain, as a second objective of this paper, a local well-posedness result of the so-called compressible Bingham system
\begin{equation}\label{Bing-sys}
\begin{array}{ccc}
\del_{t}\rho+\del_x(\rho \uu)=0\vspace*{0.2cm}\\
\del_{t}(\rho \uu)+\del_x(\rho \uu^2)-\del_x \Ss_0 +\del_x p=\rho f
\end{array}
\end{equation}
where $p=a\rho^\gamma$ and $\Ss_0$ being the stress tensor given as follows
\begin{equation}\label{S-bing}
\left\{
\begin{array}{lll}
\Ss_0=\mu \del_x \uu+\tau^*\dfrac{\del_x(\uu)}{|\del_x(\uu)|}\,\,&{\rm if}\,\, |\del_x\uu|\neq 0\\
|\Ss_0|<\tau^*\,\, &{\rm if}\,\, |\del_x\uu|= 0.
\end{array}
\right.
\end{equation}
 The Bingham fluid  behaves like a rigid body with a small stress intensity, and like a viscous fluid if the stress intensity exceeds the yield stress value $\tau^*$. The condition $\del_x \uu=0$ corresponds to the rigid part of a flow while the latter condition conforms to viscid flow. The modeling of Bingham materials is of crucial importance in industrial applications since a large variety of materials (e.g. foams, pastes, slurries, oils,
ceramics, etc.) exhibit the fundamental character of viscoplasticity, that is the capability of flowing only if the stress is above some critical value. Besides, we stress out that our result on compressible Bingham system extends the work of I. V. Basov and V. V. Shelukhin where vacuum is excluded initially; however, locally in-time, see  \cite{Ba-Sh-strong1d}.  
 \medskip

   The structure of the paper is the following.  In the next section, we present our main results, namely Theorem \ref{theo1} which shows the local well-posedness of Power Law system \eqref{Power-Law}-\eqref{Sdelta-PL}, and Theorem \ref{theo-2} which concerns the local well-posedness of Bingham system \eqref{Bing-sys}-\eqref{S-bing}. 
In Section \ref{Sec-NES}, we give an overview about elliptic regularity of nonlinear elliptic system with $(\delta,q)-$structure, and we present our contribution. Section \ref{Sec-The1} (resp. Sect. \ref{Sec-theo2}) is devoted to proving Theorem \ref{theo1} (resp. Theo. \ref{theo-2}). Finally, several tedious computations, which are skipped in the main body of this paper, will be provided in  Appendix  \ref{Appen-sec} for the sake of self-containedness.
\medskip

   After writing this paper, the author was informed by the recent work \cite{sarka}. Actually, in that paper the authors showed the existence and uniqueness of local in-time strong solutions for a more general class of non-Newtonian fluids than the Power Law model considered in the first part of this paper, however, using a different method of proof. More precisely, they worked with Lagrangian coordinates (here we use Eulerian coordinates), and they showed the short-time existence using the Weis multiplier theorem (here we use a fixed-point argument combined with the characteristics method and the Faeodo-Galerkin method). We claim that the result obtained in that paper could be also proved using our method of proof without too much effort. Mainly, we can show the local existence of a strong solution to a class of non-Newtonian fluids whose constitutive law is given as follows
\begin{equation}\label{gen-cons-law}
\Ss= 2\mu (|\D_d(\uu)|^2)\D_d(\uu) +\lambda(\divv \uu) \divv \uu \, \mathbb{I},
\end{equation}
with the following assumptions\footnote{These assumptions were imposed in \cite{sarka}.}
\[
\mu \in C^{3}([0,+\infty)) \qquad \lambda\in C^{2}(\RR),
\]
\[
\mu(s)+2s\mu^\prime(s)>0 \quad \mbox{for all}\quad s\geq 0 \qquad \lambda(r)+r\lambda^\prime(r)>0\quad \mbox{for all}\quad r\in \RR.
\]
The main ingredient will be the $L^p-$estimates on the associated elliptic system shown in \cite{sarka} under the above conditions. In this paper, we wanted to take a particular choice of constitutive law \eqref{gen-cons-law}, formulated in \eqref{Sdelta-PL}, first because the Power Law model is of its own independent interest, and second because it represents a good approximation of Bingham model \eqref{Bing-sys}-\eqref{S-bing}. Indeed, it is important to track the dependence of the a priori estimates established on $(\rho_\delta,\uu_\delta)$ solution of \eqref{Power-Law}-\eqref{Sdelta-PL} in terms of the parameter $\delta$ to be able to study the limit $\delta$ tends to zero. Certainly, the result shown in \cite{sarka} does not cover Bingham's fluid case.
\section{Main results}\label{Sec-MR}
In this section, we give first the definitions of strong solutions to system \eqref{Power-Law}-\eqref{Sdelta-PL} and to system \eqref{Bing-sys}-\eqref{S-bing}, then we state the main results of this paper, namely Theorem \ref{theo1} and Theorem \ref{theo-2}. 
\subsection{Well-posedness of Power Law system}
We state the following definition of a strong solution to system \eqref{Power-Law}-\eqref{Sdelta-PL}.
\begin{definition}\label{def-sol}
The pair $(\rho_\delta,\uu_\delta)$ is called a strong solution to system \eqref{Power-Law}-\eqref{Sdelta-PL} if $(\rho_\delta,\uu_\delta)$ is a weak solution, satisfying equations \eqref{Power-Law} almost everywhere in $(0,T^*)\times \Omega$, and enjoys the following properties 
\begin{align*}
&\rho_\delta\in L^{\infty}(0,T^*; W^{1,6}(\Omega))  \hspace*{1.7cm}(\rho_\delta)_t \in L^\infty(0,T^*; L^6(\Omega))\\
&\uu_\delta\in L^{\infty}(0,T^*; H^{1}(\Omega)) \hspace*{2cm} \nabla^2 \uu_\delta\in L^{2}(0, T^*; L^6(\Omega))\\
&\sqrt{\rho_\delta}(\uu_\delta)_t\in L^{\infty}(0, T^*; L^2(\Omega)) \hspace*{1cm}  (\uu_\delta)_t\in L^{2}(0, T^*; H^{1}(\Omega)).
\end{align*}
\end{definition}

The first main result of this paper is presented in the following theorem.
\begin{theorem}\label{theo1}
Let $\Omega=\mathbb{T}^3$ a periodic domain. Suppose that
$$\mu >0\quad \quad  2\mu +\lambda>0\quad q
\geq 1\quad \delta\geq c>0.$$
We assume moreover that the initial data $(\rho_0^\delta, \uu_0^\delta)$ and the external force $f$ satisfy the following 
\begin{equation}\label{cond-ini-PL}
0\leq \rho_0^\delta\in W^{1,6}(\Omega),\qquad \uu_0^\delta\in W^{2,6}(\Omega),\qquad f\in H^{1}(0,T; W^{1,6}(\Omega)),
\end{equation}
and the following compatibility condition holds 
\begin{equation}\label{comp-13}
-\divv\Big(2\mu \D(\uu_0^\delta)+\lambda\divv \uu_0^\delta\,\mathbb{I}+\tau^* \big(|\D(\uu_0^\delta)|^2+\delta^2 \big)^{\frac{q-2}{2}} \D(\uu_0^\delta)\Big)+\nabla p_0^\delta=\sqrt{\rho_{0}^\delta} g\quad \mbox{for a.e.}\quad x\in \Omega,
\end{equation}
where $g$ is a function in $ L^{6}(\Omega)$. Then, there exist a small time $T^*\in (0,T)$ and a unique strong  solution $(\rho_\delta, \uu_\delta)$ in the sense of Definition \ref{def-sol} to the initial value problem \eqref{Power-Law}-\eqref{Sdelta-PL}  with initial conditions \eqref{cond-ini-PL}.
\end{theorem}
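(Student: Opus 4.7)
The plan is to follow the Choe--Kim strategy for compressible Navier--Stokes, adapted to the nonlinear viscous stress $\Ss_\delta$. I would construct the solution by a linearized iteration: given $(\rho^n,\uu^n)$ with $\rho^0=\rho_0^\delta$ and $\uu^0$ the solution of a heat-type Cauchy problem with $\uu^0(0)=\uu_0^\delta$, define $\rho^{n+1}$ as the solution of the linear transport equation $\partial_t\rho^{n+1}+\divv(\rho^{n+1}\uu^n)=0$ with initial datum $\rho_0^\delta$ (solved via characteristics), and define $\uu^{n+1}$ as the solution of the quasi-linear momentum equation
\[
\rho^{n+1}\partial_t\uu^{n+1}+\rho^{n+1}(\uu^n\cdot\nabla)\uu^{n+1}-\divv\Ss_\delta(\uu^{n+1})+\nabla p(\rho^{n+1})=\rho^{n+1}f,
\]
which, at the linear level, is an elliptic-in-space system for $\uu^{n+1}$ at each time once the time derivative is viewed as a source. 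The Faedo--Galerkin method combined with the $L^p$-regularity of Proposition~\ref{prop-reg} for $-\divv\Ss_\delta$ over $\mathbb{T}^3$ provides existence at each iteration.

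For the uniform estimates, I would first propagate the regularity $\rho^{n+1}\in L^\infty(0,T^*;W^{1,6})$ and control $\partial_t\rho^{n+1}$ in $L^\infty(L^6)$ using the transport equation and Gagliardo--Nirenberg-type bounds that involve $\|\nabla\uu^n\|_{L^\infty}$; these follow from $\nabla^2\uu^n\in L^2(L^6)$ by Sobolev embedding, hence are tractable in small time. For the velocity, the key quantities are $\|\sqrt{\rho^{n+1}}\partial_t\uu^{n+1}\|_{L^\infty_tL^2_x}$ and $\|\partial_t\uu^{n+1}\|_{L^2_tH^1_x}$. The compatibility condition~\eqref{comp-13} is essential here: it gives, at $t=0$, that $\sqrt{\rho_0^\delta}\,(\partial_t\uu^{n+1})(0)=-\sqrt{\rho_0^\delta}\,(\uu^n\cdot\nabla)\uu_0^\delta+\sqrt{\rho_0^\delta}\,g\in L^2$, which is the initial bound needed to start the estimate obtained by differentiating the momentum equation in $t$, multiplying by $\partial_t\uu^{n+1}$, and integrating. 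Once $\partial_t\uu^{n+1}$ is under control, I read the momentum equation as an elliptic system $-\divv\Ss_\delta(\uu^{n+1})=F^{n+1}$, apply Proposition~\ref{prop-reg} with $p=6$ (this is where $\delta\ge c>0$ is used crucially to make the principal part uniformly elliptic of $q$-structure at scale $\delta$), and upgrade to $\nabla^2\uu^{n+1}\in L^2(L^6)$. All nonlinear terms are estimated by products of the high norms against small factors $T^*$, so a Gronwall/bootstrap argument on a suitably small interval $(0,T^*)$ closes the loop and yields uniform bounds on the iterates.

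The convergence of the iteration is shown by estimating the differences $\bar\rho^{n+1}=\rho^{n+1}-\rho^n$ and $\bar\uu^{n+1}=\uu^{n+1}-\uu^n$ in the weaker norms $L^\infty(L^2)\times L^2(H^1)$. One subtracts the two levels of transport and momentum equations, multiplies by $\bar\rho^{n+1}$ and $\bar\uu^{n+1}$, and absorbs the $\Ss_\delta$-difference using the monotonicity/Lipschitz-in-$(\delta,q)$-structure inequality for $(\D\uu^{n+1},\D\uu^n)\mapsto \Ss_\delta(\uu^{n+1})-\Ss_\delta(\uu^n)$; this produces a contraction of the form $\Phi_{n+1}\le C\,T^*\,\Phi_n$ for $T^*$ small, giving Cauchy convergence to a strong solution $(\rho_\delta,\uu_\delta)$ with the regularity of Definition~\ref{def-sol}. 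Uniqueness is then an energy estimate of exactly the same type between any two strong solutions sharing the data.

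The step I expect to be the main obstacle is closing the nonlinear estimate on $\nabla^2\uu^{n+1}$ in $L^2(L^6)$. Unlike the Newtonian case, the principal part $\divv\Ss_\delta$ is genuinely quasi-linear, and the $L^6$ right-hand side contains terms like $\rho^{n+1}\partial_t\uu^{n+1}$, $\rho^{n+1}(\uu^n\cdot\nabla)\uu^{n+1}$, and $\nabla p(\rho^{n+1})$ whose $L^6$-norms must be bounded without overloading the Gronwall argument; this is where one must carefully exploit $\rho_0^\delta\in W^{1,6}$, $\uu_0^\delta\in W^{2,6}$, and the $H^1(W^{1,6})$-regularity of $f$. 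A secondary technical point is to verify that Proposition~\ref{prop-reg} applies uniformly along the iteration, i.e.\ that the constants in the $L^6$-elliptic estimate depend only on $(\mu,\lambda,\tau^*,\delta,q)$ and not on $\uu^{n+1}$ itself --- this is precisely why the lower bound $\delta\ge c>0$ is invoked and why the three-dimensional result stops short of the Bingham limit $\delta\to 0$.
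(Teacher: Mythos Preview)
Your proposal is essentially correct and identifies all the key ingredients --- the compatibility condition to initialize $\sqrt{\rho}\,\uu_t$ in $L^2$, the $L^p$ elliptic regularity of Proposition~\ref{prop-reg} (with its $\delta^{-1}$-dependence), and the monotonicity of $\Ss_\delta$ for uniqueness --- but your construction differs from the paper's. You run a decoupled iteration $(\rho^n,\uu^n)\mapsto(\rho^{n+1},\uu^{n+1})$ at the PDE level and close by a low-norm contraction; this is the original Choe--Kim scheme. The paper instead first records the formal a priori estimates on a hypothetical smooth solution, organized around the scalar quantity $\psi(t)=1+\|\nabla\uu\|_{L^2}^2+\|\sqrt{\rho}\uu_t\|_{L^2}^2+\|p\|_{W^{1,6}}$ and culminating in an integral inequality of Gronwall type for $\psi$, and then constructs approximations by a Faedo--Galerkin scheme on finite-dimensional spaces $X_m$: at each level $m$ the \emph{coupled} system is solved by a Banach fixed-point argument in the spirit of Feireisl--Novotn\'y, the same bounds are re-derived uniformly in $m$, the limit $m\to\infty$ is taken by weak compactness (and Vitali for the nonlinear stress), and finally an auxiliary lower bound $\rho_0\ge\varepsilon$ is removed. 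Your route buys a clean convergence argument; the paper's route avoids re-solving a quasi-linear momentum problem at every iteration and makes the passage to the limit a compactness rather than a contraction step. One point you should sharpen: in the paper the $W^{2,6}$ bound is not a direct nonlinear elliptic estimate but is obtained by freezing the principal part at a reference state $\uu^*$ (the initial datum in the Galerkin step) and treating $\mathcal{A}(\uu,\D)\uu-\mathcal{A}(\uu^*,\D)\uu$ as a perturbation controlled by $\|\nabla(\uu-\uu^*)\|_{L^\infty}$; your assertion that the $L^6$-elliptic constant ``does not depend on $\uu^{n+1}$ itself'' glosses over exactly this perturbation/absorption step.
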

The proof of this theorem is given in Section \ref{Sec-The1}.
\begin{remark}\label{chin-resul}
When $q\in (1,2)$, the local-in-time existence and uniqueness of strong solutions to system \eqref{Power-Law}-\eqref{Sdelta-PL} was shown in \cite{Hong-Xiao_diff} in the case of "pure nonlinear singular viscosity term", i.e. for $\mu=\lambda=\delta=0$; however, this result is limited to the one-dimensional space. We were not interested here to see if such result could be generalized to the multi-dimensional case since the assumption $q>1$ is critical in that case. Actually, the case when $q=1$ requires a specific attention, and it constitutes our interest in the second main theorem of this paper.
\end{remark}

\subsection{Well-posedness of Bingham system}
We state the following definition of a strong solution to system \eqref{Bing-sys}-\eqref{S-bing}.
\begin{definition}\label{def-sol-bing}
The pair $(\rho,\uu)$ is called a strong solution to system \eqref{Bing-sys}-\eqref{S-bing} if $(\rho,\uu)$ is a weak solution, satisfying \eqref{Bing-sys} almost everywhere in $(0,T^*)\times \Omega$, and enjoys the following properties 
\begin{align*}
&\rho\in L^{\infty}(0,T^*; W^{1,6}(\Omega))  \hspace*{1.2cm}\rho_t \in L^\infty(0,T^*; L^6(\Omega))\\
&\uu\in L^{\infty}(0,T^*; H^{1}(\Omega)) \hspace*{1.5cm} \del_x^2 \uu\in L^{2}(0, T^*; L^6(\Omega))\\
&\sqrt{\rho}\uu_t\in L^{\infty}(0, T^*; L^2(\Omega)) \hspace*{1cm}  \uu_t\in L^{2}(0, T^*; H^{1}(\Omega)).
\end{align*}
\end{definition}

The second main result of this paper is presented in the following theorem.
\begin{theorem}\label{theo-2}
Let $\Omega$ a periodic domain in $\mathbb{R}$. Suppose that $\mu >0$ and 
 the initial data $(\rho_0, \uu_0)$ and the external force $f$ satisfy the following regularity
\begin{equation}\label{cond-ini-bing}
0\leq \rho_0\in H^1(\Omega),\quad \uu_0\in W^{2,6}(\Omega),\quad f\in H^{1}(0,T; W^{1,6}(\Omega)),
\end{equation}
and the following compatibility condition holds 
\begin{equation}\label{comp-bing}
-\del_x\big(\Ss_0\big)+\del_x p_0=\sqrt{\rho_0}g \quad \mbox{for a.e.}\quad x\in \Omega,
\end{equation}
where $g$ is a function in $L^6(\Omega)$, and $\Ss_0$ is defined as follows
\begin{equation}\label{S0-comp}
\left\{
\begin{array}{lll}
\Ss_0=\mu \del_x \uu_0+\tau^*\dfrac{\del_x(\uu_0)}{|\del_x(\uu_0)|}\,\,&{\rm if}\,\, |\del_x\uu_0|\neq 0\\
|\Ss_0|<\tau^*\,\, &{\rm if}\,\, |\del_x\uu_0|= 0.
\end{array}
\right.
\end{equation}
Then there exist a small time $T^*\in (0,T)$ and a unique strong  solution $(\rho, \uu)$ in the sense of Definition \ref{def-sol-bing} to the initial value problem \eqref{Bing-sys}-\eqref{S-bing} with initial conditions given in \eqref{cond-ini-bing}.
\end{theorem}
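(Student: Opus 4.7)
The plan is to obtain the solution of the Bingham system \eqref{Bing-sys}--\eqref{S-bing} as the vanishing--regularization limit of the one-dimensional Power Law system \eqref{Power-Law}--\eqref{Sdelta-PL} with $q=1$ and $\delta\searrow 0$. For fixed $\delta>0$, I would first set up the one-dimensional analogue of Theorem~\ref{theo1}: in $1$D one has $\divv\uu=\del_x u$, so the regularized stress reduces to $\Ss_\delta=(2\mu+\lambda)\del_x u+\tau^{*}\phi_\delta(\del_x u)$ with $\phi_\delta(s)=s/\sqrt{s^{2}+\delta^{2}}$. The elliptic operator $-\del_x\Ss_\delta=-\bigl((2\mu+\lambda)+\tau^{*}\phi_\delta'(\del_x u)\bigr)\del_x^{2}u$ is uniformly elliptic with ellipticity constant bounded below by $2\mu+\lambda>0$ (taking $\lambda=-\mu$ for the Bingham case, by $\mu>0$) \emph{independently of $\delta$}, because $\phi_\delta'\ge 0$. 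Hence the $L^{p}$-regularity estimates of Proposition~\ref{prop-reg} degenerate neither as $\delta\to 0$ nor for $q=1$, and the obstruction that forced $\delta\ge c>0$ in Theorem~\ref{theo1} simply disappears in one space dimension.

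Second, I would regularize the initial data so that the compatibility condition \eqref{comp-bing}--\eqref{S0-comp} is turned into the Power-Law compatibility \eqref{comp-13}: since the Bingham selection $\Ss_0$ with $|\Ss_0|\le\tau^{*}$ on $\{\del_x\uu_0=0\}$ is prescribed, I would define $g_\delta=(\sqrt{\rho_0})^{-1}\bigl(\del_x p_0-\del_x\Ss_\delta(\uu_0)\bigr)$ and verify, using $\uu_0\in W^{2,6}$ together with the pointwise bound $|\phi_\delta|\le 1$ and the uniform ellipticity above, that $g_\delta\in L^{6}(\Omega)$ (with a norm that may depend on $\delta$, which is harmless at this stage). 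Invoking the one-dimensional version of Theorem~\ref{theo1} yields a strong solution $(\rho_\delta,\uu_\delta)$ on some interval $[0,T^{*}_\delta]$. The crucial step is then to rerun the a priori estimates of Section~\ref{Sec-The1} while tracking the dependence on $\delta$: all $\delta$-dependent terms arise only through $\tau^{*}\phi_\delta$, whose $L^{\infty}$ norm is bounded by $1$, and whose derivative enters the elliptic estimates only via the (monotone, hence nonnegative) coefficient $\phi_\delta'$. This should give bounds on $\rho_\delta\in L^{\infty}(0,T^{*};W^{1,6})$, $\uu_\delta\in L^{\infty}(0,T^{*};H^{1})$, $\del_x^{2}\uu_\delta\in L^{2}(0,T^{*};L^{6})$, $\sqrt{\rho_\delta}(\uu_\delta)_t\in L^{\infty}(0,T^{*};L^{2})$ and $(\uu_\delta)_t\in L^{2}(0,T^{*};H^{1})$ on a \emph{common} time interval $T^{*}>0$ independent of $\delta$.

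Third, I would pass to the limit $\delta\to 0$. Weak-$\ast$ compactness extracts limits $(\rho,\uu)$ with the same regularity. Aubin--Lions applied to $\del_x\uu_\delta$ (bounded in $L^{2}(0,T^{*};H^{1})$ with time derivative controlled via $(\uu_\delta)_t\in L^{2}(H^{1})$) yields strong convergence of $\del_x\uu_\delta$ in $L^{2}((0,T^{*})\times\Omega)$, hence a.e. convergence along a subsequence. This passes linear and convective terms to the limit and is more than enough for the continuity equation (the one-dimensional embedding $H^{1}\hookrightarrow C^{0}$ helps with the transport). The main obstacle is the identification of the Bingham stress: $\phi_\delta(\del_x\uu_\delta)$ is bounded in $L^{\infty}$ and thus weak-$\ast$ converges to some $\chi$ with $|\chi|\le 1$; on $\{\del_x\uu\neq 0\}$ the a.e. convergence of $\del_x\uu_\delta$ forces $\chi=\del_x\uu/|\del_x\uu|$, while on $\{\del_x\uu=0\}$ the bound $|\chi|\le 1$ is exactly the selection required by \eqref{S-bing}. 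Thus $\Ss_0=\mu\del_x\uu+\tau^{*}\chi$ verifies \eqref{S-bing} a.e., and the momentum equation passes to the limit.

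Uniqueness I would obtain directly, without regularization: given two solutions $(\rho_1,\uu_1),(\rho_2,\uu_2)$ with selections $\Ss_0^{i}$, the maximal monotonicity of $s\mapsto \mu s+\tau^{*}\partial|s|$ gives $\bigl(\Ss_0^{1}-\Ss_0^{2}\bigr)\bigl(\del_x\uu_1-\del_x\uu_2\bigr)\ge\mu|\del_x\uu_1-\del_x\uu_2|^{2}$, so the standard relative energy computation (as in the uniqueness part of Theorem~\ref{theo1}) closes by Gr\"onwall on the difference $(\rho_1-\rho_2,\uu_1-\uu_2)$. The principal difficulty of the whole argument is the third paragraph: controlling the $L^{6}$ estimate on $\del_x^{2}\uu_\delta$ uniformly in $\delta$ near the set where $\del_x\uu_\delta$ vanishes, which is precisely where uniform ellipticity in $1$D (but not in higher dimensions) saves the day.
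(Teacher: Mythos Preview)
Your overall strategy---approximate by the one-dimensional Power Law system with $q=1$, exploit the $\delta$-independent $W^{2,p}$ elliptic estimate of Proposition~\ref{prop-reg} (first item), then send $\delta\to 0$---is exactly the paper's route, and your uniqueness argument via monotonicity of $s\mapsto\mu s+\tau^*\partial|s|$ coincides with what the paper does.

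There is, however, a genuine gap in how you set up the approximate compatibility condition. You keep $\uu_0$ fixed and define $g_\delta=(\sqrt{\rho_0})^{-1}\bigl(\del_x p_0-\del_x\Ss_\delta(\uu_0)\bigr)$. First, since vacuum $\rho_0=0$ is allowed, this quotient need not lie in $L^6$ at all. Second, and more seriously, even where $\rho_0>0$ the norm $\|g_\delta\|_{L^2}$ in general blows up as $\delta\to 0$: the contribution $\tau^*\phi_\delta'(\del_x\uu_0)\,\del_x^2\uu_0$ inside $\del_x\Ss_\delta(\uu_0)$ carries $\phi_\delta'(0)=\delta^{-1}$, and near a generic simple zero of $\del_x\uu_0$ one computes $\|\phi_\delta'(\del_x\uu_0)\,\del_x^2\uu_0\|_{L^2}\sim\delta^{-1/2}$. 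This is \emph{not} harmless: in Step~3 of the a~priori estimates (see \eqref{27-ch}--\eqref{ch-28}) the constant $C(\rho_0,\uu_0)=\|g\|_{L^2}^2$ controls $\|\sqrt{\rho}\uu_t\|_{L^2}^2$ at the initial time and hence $\psi(0)$; a $\delta$-dependent $\|g_\delta\|_{L^2}$ forces the existence time $T^*$ to shrink to $0$ with $\delta$. Rerunning the interior estimates cannot repair this, because the dependence enters through the initial trace, not through the equation.

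The paper's remedy is to swap the roles: keep $g$ (the one from \eqref{comp-bing}) fixed and regularize the \emph{velocity}, defining $\uu_0^\delta$ as the solution of the nonlinear elliptic problem
\[
-\del_x\Big(\mu\,\del_x\uu_0^\delta+\tau^*\dfrac{\del_x\uu_0^\delta}{\sqrt{|\del_x\uu_0^\delta|^2+\delta^2}}\Big)=\sqrt{\rho_0}\,g-a\,\del_x\rho_0^\gamma.
\]
The first item of Proposition~\ref{prop-reg} gives $\uu_0^\delta\in W^{2,6}$ with a bound independent of $\delta$, and Theorem~\ref{theo1} then applies with the \emph{fixed} $g$, so all constants and $T^*$ are uniform in $\delta$. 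A smaller point: your identification of the limit stress on $\{\del_x\uu=0\}$ yields only $|\chi|\le 1$, whereas \eqref{S-bing} demands the strict inequality $|\Ss_0|<\tau^*$; the paper secures this by a short measure-theoretic contradiction argument using the strong convergence of $\del_x\uu_\delta$.
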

The proof of this theorem is given in Section \ref{Sec-theo2}.
\begin{remark}
Our assumptions on the initial data and external force in Theorem \ref{theo-2} are not optimal. We did not try to optimize such assumptions for two reasons: first, the proof of Theorem \ref{theo-2} is based on the result presented in Theorem \ref{theo1} where all the estimates needed for such a result have been established under those assumptions. Second, as the one-dimensional case is not an interesting case in the physical point of view, then we prefer leave this issue to our forthcoming paper \cite{altaki-bing2} where we aim to extend the result presented in Theorem \ref{theo-2} to the multidimensional case (up to now to the two-dimensional case, see Remark \ref{rem-FS}). 
\end{remark}

\section{Nonlinear elliptic system with $(\delta,q)-$structure}\label{Sec-NES}
As evoked in the introduction, studying the elliptic regularity for a nonlinear elliptic system with $(\delta,q)-$structure depending on the symmetric part of the gradient constitutes one of the main ingredients of our proof of local well-posedness result to system \eqref{Power-Law}-\eqref{Sdelta-PL} and system \eqref{Bing-sys}-\eqref{S-bing}. Therefore, this section aims to give a short review of the state-of-the-art of such a problem and to present our contribution, namely, Proposition \ref{prop-reg} below. Indeed, consider 
the following nonlinear elliptic system over a periodic domain
\begin{align}
    \begin{split}\label{elliptic-ope-GENERAL}
        -\divv \Ss_\delta&=f,\\
         \Ss_\delta &=
2\mu \D(\uu)+\lambda \divv \uu\,\mathbb{I}+\tau^*(|\D(\uu)|^2 +\delta^2)^{\frac{q-2}{2}} \D(\uu),
     \end{split}
\end{align}
and assume that $f$ is a given function belongs to $L^p,\, 1<p<\infty,$  satisfying the compatibility condition $\int_\Omega f\,dx=0.$

\medskip
By dropping the divergence operator in the above system, i.e., taking $\divv \uu=0$ (case of incompressible fluids), we can find an extensive literature about the study of the regularity of weak solutions to such a system with different types of boundary conditions.  We do not want to comment on all of the existence results, and to avoid any further discussion about the complexity produced by the boundary conditions, we do not specify them instead we leave those various choices open as they may depend on the original problem. Indeed, we want to insist here on the conditions imposed on the parameters appeared in the stress $\Ss_\delta$ under which the $H^{2}-$estimate, or more generally, the $W^{2,p}-$estimates have been shown. The interested reader is referred to \cite{Fuchs-Seregin-Book, wielage2005analysis, aman-reg, Bo-Pr-Lp, Veiga, Ber-Die-Ruz, Bers-Ruz, Bers3} and the references given therein. Readers interested in a similar problem, precisely by p-Laplacian equation, are referred to the recent work by A. Cianchi and V. G. Maz'ya in \cite{Cia-mazya}. To summarize, the results established in these papers tell us
\begin{equation*}
{\rm if}\; f\in L^2(\Omega) \quad \mbox{and}\quad    \mu> 0 \quad \tau^*\geq 0, \quad \delta\geq 0 \quad q> 1, \qquad \mbox{then} \quad \uu\in W^{2,2}(\Omega).
\end{equation*}
Some of the papers mentioned above treated the case when $\mu=0.$  However, a further assumption on $f$ was imposed, and different regularity on $\uu$ has been shown depending on the domain under consideration. 
On the other hand, in \cite{Bo-Pr-Lp}, the authors studied the parabolic problem associated to \eqref{elliptic-ope-GENERAL}.
 Their study is based on maximal regularity theory for a suitable linear problem and a contraction argument. Assuming that the initial data $\uu\vert_{t=0}=\uu_0(x)$ belongs to $W^{2-\frac{2}{p},p}(\Omega)$ with $p>d+2$, then their result is summarized as follows
\begin{equation}\label{br-bot-res}
{\rm if}\; f\in L^p(\Omega) \quad \mbox{and}\quad   \mu\geq 0 \quad \tau^*>0, \quad \delta\geq c> 0  \quad q\geq  1, \qquad \mbox{then} \quad \uu\in W^{2,p}(\Omega).
\end{equation}
Clearly, we can remark that the case when $\delta=0$ and $q=1$ is not included in the above results. We shall see that in one dimension space, the $W^{2,p}-$estimates are still true even when $\delta=0$ and $q=1$, however, for $\mu>0,$ as well  the $H^{2}-$estimate in dimension 2 or 3. This is true even for compressible fluids, i.e., for $\divv u\neq 0$. Nevertheless, for dimension greater than one, the $W^{2,p}-$estimates require, at least using our method, the assumption $\delta \geq c>0.$

\subsection{Existence of weak solutions}
 Given $f\in L^{2}(\Omega).$ For $\delta>0,$ we say that $\uu\in H^{1}(\Omega)$ is a weak solution to \eqref{elliptic-ope-GENERAL} if for all $\varphi \in H^{1}(\Omega),$ we have
\begin{align*}
\intO \Ss_\delta : \nabla \varphi\,dx= \intO f\cdot \varphi\,dx.
\end{align*} 
The proof of existence and uniqueness of weak solutions to \eqref{elliptic-ope-GENERAL} is not so difficult, and it could be shown using a sort of compacity/monotonity arguments (cf. \cite{J-Lions-Q}), or using tools from  monotone operator theory (cf. \cite{Brezis-op}).

The constitutive law $\Ss_\delta$ constitutes a good approximation of Bingham's constitutive law $\Ss_0$ given in \eqref{S-bing} in the one-dimensional setting. The notion of weak solutions of \eqref{elliptic-ope-GENERAL} when $\Ss_\delta$ is replaced by $\Ss_0$ was firstly developed by G. Duvaut and J.-L. Lions in their book \cite{Du-Lions-B} by introducing the concept of variational inequality. Another variational approach was  later proposed, see for instance \cite{Sh-B-L, Fuchs-Seregin-Book}.

\subsection{Regularity of weak solutions} 
Let us assume that there exists a unique solution to system \eqref{elliptic-ope-GENERAL}. Then we have the following result.
\begin{proposition}\label{prop-reg} 
Let $\Omega$ a bounded domain with periodic boundary condition in $\mathbb{R}^d$. Then, the following assertions hold.
\medskip

\begin{itemize}
\item If $d=1$ and $f\in L^{p}(\Omega)$, then the solution $\uu$ of \eqref{elliptic-ope-GENERAL} is in $W^{2,p}(\Omega)$ provided $\mu>0,\, q\geq 1$, uniformly with respect to $\delta,$ that is it for $\delta\geq 0$. Particularly, we have
\[
\dfrac{\mu}{p} \intO |\del_x^2\uu|^p\,dx+\tau^*(q-1)\intO 
|\del_x\uu|^2(|\del_x \uu|^2+\delta^2)^{\frac{q-4}{2}}|\del_x^2 \uu|^{p}\,dx\leq \dfrac{\mu^{1-p}}{p} \intO |f|^p\,dx.
\] 

\item If $d=2$ or $3$, and $f\in L^{2}(\Omega)$,  then the solution $\uu$ of \eqref{elliptic-ope-GENERAL} is in $H^{2}(\Omega)$ provided $\mu>\varepsilon, 2\mu+\lambda>\varepsilon,$ for some small $\varepsilon>0, q\geq 1$, uniformly with respect to $\delta$, that is it for $\delta\geq 0$. Particularly, we have
\begin{align*}
\mu \intO &|\nabla \curl \uu |^2\,dx +(2\mu+\lambda) \intO |\nabla \divv \uu|^2\,dx \\
&+\tau^*\min(1,(q-1))\intO  |\D(\uu)|^2 (|\D(\uu)|^2+\delta^2)^{\frac{q-4}{2}} |\nabla \D(\uu)|^2\,dx\leq C \intO |f|^2\,dx.
\end{align*}

\item If $d=2$ or $3$, and $f\in L^{p}(\Omega)$,  then the linearized operator associated to equation \eqref{elliptic-ope-GENERAL} at a reference solution $\uu^*\in W^{2-\frac{2}{p},p}(\Omega),\; p>d+2,$ denoted by $\mathcal{A}(\uu^*, \D)$  still yield maximal $L^p-$regularity  provided  $\mu>0,\; 2\mu +\lambda>0, \; q\geq 1$; however, not uniformly with respect to $\delta$, i.e., for $\delta \geq c>0.$ Moreover, by perturbation argument, we get
\[
\| \uu \|_{W^{2,p}(\Omega)}\leq \|\mathcal{A}(\uu^*, \D) \uu\|_{L^p (\Omega)}\leq C(\delta^{-1}) \| f\|_{L^p (\Omega)}.
\]
\end{itemize}
\end{proposition}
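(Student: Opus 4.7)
The plan is to treat the three assertions independently, since each uses a different ingredient: a pointwise algebraic manipulation for the one-dimensional case, a curl--div energy argument for the $H^2$ bound in dimension two or three, and a maximal $L^p$-regularity plus perturbation argument at a reference profile for the $L^p$ bound. For the one-dimensional claim, the equation $-\del_x \Ss_\delta = f$ rewrites as $-[\mu+\tau^*\phi'(\del_x\uu)]\del_x^2\uu = f$ with $\phi(\xi)=(\xi^2+\delta^2)^{(q-2)/2}\xi$, whose derivative
\begin{equation*}
\phi'(\xi) = (\xi^2+\delta^2)^{(q-4)/2}\bigl[(q-1)\xi^2+\delta^2\bigr]
\end{equation*}
is non-negative for $q\geq 1$. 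Multiplying by $|\del_x^2\uu|^{p-2}\del_x^2\uu$, integrating on $\Omega$, and invoking Young's inequality on the right-hand side with weight $\mu^{(p-1)/p}$ yields the stated estimate uniformly in $\delta\geq 0$, the factor $(q-1)|\del_x\uu|^2(|\del_x\uu|^2+\delta^2)^{(q-4)/2}$ on the left coming from the pointwise lower bound on $\phi'(\del_x\uu)$.

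For the $H^2$ estimate in $d=2$ or $3$, I would differentiate $-\del_i\Ss_{\delta,ij}=f_j$ in $x_k$, test against $\del_k\uu_j$, and integrate by parts to obtain $\intO \del_k\Ss_{\delta,ij}\,\del_i\del_k\uu_j\,dx = -\intO f\cdot\Delta\uu\,dx$. Using the Korn-type identity $\intO|\nabla \D(\uu)|^2 = \tfrac12\intO(|\nabla^2\uu|^2+|\nabla\divv\uu|^2)$ and the torus decomposition $\|\nabla^2\uu\|_{L^2}^2 = \|\nabla\curl\uu\|_{L^2}^2+\|\nabla\divv\uu\|_{L^2}^2$, the linear part of $\del_k\Ss_\delta$ contributes exactly $\mu\|\nabla\curl\uu\|_{L^2}^2+(2\mu+\lambda)\|\nabla\divv\uu\|_{L^2}^2$. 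The nonlinear part produces a definite-sign piece $\tau^*(|\D(\uu)|^2+\delta^2)^{(q-2)/2}|\nabla \D(\uu)|^2$ and a cross term with coefficient $\tau^*(q-2)(|\D(\uu)|^2+\delta^2)^{(q-4)/2}$ acting on $(\D(\uu){:}\del_k\D(\uu))^2$; the latter is non-negative when $q\geq 2$, while for $1\leq q<2$ its bad sign is controlled by the pointwise Cauchy--Schwarz bound $(\D{:}\del_k\D)^2\leq|\D|^2|\del_k\D|^2$ combined with $|\D|^2(|\D|^2+\delta^2)^{(q-4)/2}\leq(|\D|^2+\delta^2)^{(q-2)/2}$, producing the coefficient $\min(1,q-1)$. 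The right-hand side is handled via $\|f\|_{L^2}\|\Delta\uu\|_{L^2}\leq \eta\|\nabla^2\uu\|_{L^2}^2+C_\eta\|f\|_{L^2}^2$ and absorbed into the coercive left-hand side by choosing $\eta$ small compared to $\min(\mu,2\mu+\lambda)>\varepsilon$.

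For the $L^p$ statement in higher dimension, I would pass to the linearization of $\Ss_\delta$ at a reference profile $\uu^*\in W^{2-2/p,p}(\Omega)$, namely
\begin{equation*}
\mathcal{A}(\uu^*,\D)\uu = -\divv\Bigl[2\mu\D(\uu)+\lambda\divv\uu\,\mathbb{I} + \tau^*\sigma(\uu^*)\D(\uu) + \tau^*(q-2)\tilde\sigma(\uu^*)(\D(\uu^*){:}\D(\uu))\D(\uu^*)\Bigr],
\end{equation*}
with $\sigma(\uu^*)=(|\D(\uu^*)|^2+\delta^2)^{(q-2)/2}$ and $\tilde\sigma(\uu^*)=(|\D(\uu^*)|^2+\delta^2)^{(q-4)/2}$. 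The uniform lower bound $\delta\geq c>0$ together with the embedding $W^{2-2/p,p}\hookrightarrow L^\infty$ for $p>d+2$ guarantees that $\sigma(\uu^*)$, $\tilde\sigma(\uu^*)$, and their first derivatives lie in $L^\infty(\Omega)$ with bounds depending on $\delta^{-1}$. Combined with $\mu>0$, $2\mu+\lambda>0$ and $q\geq 1$, this gives a second-order system with continuous coefficients satisfying the Legendre--Hadamard condition, for which maximal $L^p$-regularity follows from the framework of \cite{Bo-Pr-Lp} (or classical Agmon--Douglis--Nirenberg theory for systems). A standard perturbation/contraction argument taking $\uu^*$ close to the actual solution then closes the estimate $\|\uu\|_{W^{2,p}}\leq C(\delta^{-1})\|\mathcal{A}(\uu^*,\D)\uu\|_{L^p}$ for the nonlinear problem.

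The main obstacle is precisely the third step: the constant in the maximal regularity estimate unavoidably blows up as $\delta\downarrow 0$ because $\tilde\sigma(\uu^*)=(|\D(\uu^*)|^2+\delta^2)^{(q-4)/2}$ degenerates when $q<2$ wherever $\D(\uu^*)$ vanishes, so the Legendre--Hadamard constant and the $L^\infty$-norm of the linearized coefficients are both controlled only in terms of $\delta^{-1}$. This is why the third assertion is stated non-uniformly in $\delta$ and why the approach cannot be pushed to the physically natural Bingham limit $\delta=0$, $q=1$ in dimension $d\geq 2$, in contrast to the one-dimensional situation where no such cross term appears.
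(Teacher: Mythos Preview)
Your proposal is correct and follows essentially the same route as the paper for all three assertions: the one-dimensional argument (multiply by $|\del_x^2\uu|^{p-2}\del_x^2\uu$, use the pointwise nonnegativity of $\phi'$, then Young), the $H^2$ energy argument in higher dimension (test against $-\Delta\uu$, exploit periodicity to get the curl--div split, and control the bad cross term for $1\leq q<2$ by Cauchy--Schwarz), and the Legendre--Hadamard plus perturbation scheme for the $L^p$ bound. Two small points: first, what you actually need at the reference profile is that the \emph{top-order coefficients} in non-divergence form are bounded and uniformly continuous, i.e.\ $\D(\uu^*)\in C^0$, which follows from the embedding $W^{2-2/p,p}\hookrightarrow C^1$ for $p>d+2$; you do not need (and in general do not have) first derivatives of $\sigma(\uu^*),\tilde\sigma(\uu^*)$ in $L^\infty$. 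Second, the paper also records an alternative proof of the third assertion via Geymonat's result that the Fredholm index of the linearized operator is independent of $p$, so the $L^2$ isomorphism established in the second step transfers to $L^p$; your direct strong-ellipticity route coincides with the paper's second proof.
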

\begin{proof}
The proofs of the first and second statements are based on the classical energy estimates method.
 We start by proving the first statement. Indeed, in one-dimensional space, equation \eqref{elliptic-ope-GENERAL} becomes
$$-\del_x\Big(\mu \del_x \uu  +\tau^* (|\del_x \uu|^2 +\delta^2)^{\frac{q-2}{2}} \del_x \uu\Big)=f.$$
Multiplying the left-hand side term of the above equation with $-|\del_x^2 \uu|^{p-2}\del_x^2 \uu$ and integrate over $\Omega$, we obtain
 \begin{align*}
 \mu &\intO |\del_x^2 \uu|^{p}\,dx+\tau^*\intO \del_x \big((|\del_x \uu|^2 +\delta^2)^{\frac{q-2}{2}} \del_x \uu \big)|\del_x^2 \uu|^{p-2}\del_x^2 \uu\,dx\\
 &= \mu \intO |\del_x^2 \uu|^{p}\,dx+\tau^* \intO (|\del_x \uu|^2 +\delta^2)^{\frac{q-2}{2}} |\del_x^2 \uu|^p \,dx +(q-2)\tau^*\intO (|\del_x \uu|^2 +\delta^2)^{\frac{q-4}{2}} |\del_x \uu|^2|\del_x^2 \uu|^p \,dx\\
 &=  \mu \intO |\del_x^2 \uu|^{p}\,dx+\tau^*  \intO (|\del_x \uu|^2 +\delta^2)^{\frac{q-4}{2}}\big[(q-1)|\del_x\uu|^2+\delta^2 \big]|\del_x^2\uu|^p\,dx\\
 &\geq \mu \intO |\del_x^2 \uu|^{p}\,dx+\tau^*(q-1)\intO 
|\del_x\uu|^2(|\del_x \uu|^2+\delta^2)^{\frac{q-4}{2}}|\del_x^2 \uu|^{p}\,dx
 \end{align*}
 Thus, we deduce that
\begin{align*}
\mu \intO |\del_x^2 \uu|^{p}\,dx+\tau^*(q-1)\intO 
|\del_x\uu|^2(|\del_x \uu|^2+\delta^2)^{\frac{q-4}{2}}|\del_x^2 \uu|^{p}\,dx\leq -\intO f \,|\del_x^2 \uu|^{p-2}\del_x^2 \uu\,dx. 
\end{align*}
Now, applying successively H\"older and Young inequalities on the right-hand side term
\begin{align*}
 \intO f \,|\del_x^2 \uu|^{p-2}\del_x^2 \uu\,dx
&\leq \Big(\intO |f|^p\,dx\Big)^{1/p}\, \Big(\intO|\del_x^2\uu|^p\,dx\Big)^{(p-1)/p}\\
&\leq \dfrac{\mu^{1-p}}{p} \intO |f|^p\,dx +\dfrac{\mu(p-1)}{p} \intO |\del_x^2\uu|^p\,dx.
\end{align*}
Finally, we have proved that
\begin{align*}
\dfrac{\mu}{p} \intO |\del_x^2\uu|^p\,dx+\tau^*(q-1)\intO 
|\del_x\uu|^2(|\del_x \uu|^2+\delta^2)^{\frac{q-4}{2}}|\del_x^2 \uu|^{p}\,dx\leq \dfrac{\mu^{1-p}}{p} \intO |f|^p\,dx.
\end{align*}
This finishes the proof of the first statement.

Let us move to prove the second statement. Indeed, firstly we rewrite equation \eqref{elliptic-ope-GENERAL} as follows
\[
-\mu \Delta \uu - (\lambda+\mu)\nabla \divv \uu -\tau^*\divv\big((|\D(\uu)|^2 +\delta^2)^{\frac{q-2}{2}} \D(\uu)\big)=f.
\]
Taking the scalar product of the above equation by $-\Delta \uu,$ and integrate over $\Omega,$ we deduce
\begin{align}\label{H2E1}
\begin{split}
\mu \intO | \Delta \uu |^2\,dx& + (\lambda+\mu) \intO  \nabla \divv \uu \cdot \Delta \uu \,dx \\
&+\tau^*\intO  \divv\big((|\D(\uu)|^2 +\delta^2)^{\frac{q-2}{2}} \D(\uu)\big)\cdot \Delta \uu \,dx=-\intO  f\cdot \Delta \uu \,dx.
\end{split}
\end{align}
Twice integration by parts over a periodic domain leads to
\begin{align*}
& \intO  \nabla \divv \uu \cdot \Delta \uu \,dx= \intO |\nabla \divv \uu |^2\,dx\\
& \intO \divv\big((|\D(\uu)|^2 +\delta^2)^{\frac{q-2}{2}} \D(\uu)\big)\cdot \Delta \uu \,dx= \intO \nabla\big((|\D(\uu)|^2 +\delta^2)^{\frac{q-2}{2}} \D(\uu)\big): \nabla \D(\uu) \,dx
\end{align*}
Besides, denoted by 
$$F_\delta(A)=(|A|^2 +\delta^2)^{\frac{q-2}{2}} A \qquad \quad A\in \mathbb{R}^{d\times d}\quad d=2,3,$$
then we compute
\begin{align*}
\dfrac{\del F_\delta(A)_{kl}}{\del A_{ij}}=(|A|^2 +\delta^2)^{\frac{q-2}{2}} \delta^{ij}_{kl}+(q-2)(|A|^2 +\delta^2)^{\frac{q-4}{2}} A_{kl} A_{ij}.
\end{align*}
Consequently, we have
\begin{align*}
\dfrac{\del F_\delta(A)_{kl}}{\del x_m}\dfrac{\del A_{kl}}{\del x_m}&=\dfrac{\del F_\delta(A)_{kl}}{\del A_{ij}}\dfrac{\del A_{ij}}{\del x_m}\dfrac{\del A_{kl}}{\del x_m} \\
&= (|A|^2 +\delta^2)^{\frac{q-2}{2}} \delta^{ij}_{kl}\dfrac{\del A_{ij}}{\del x_m}\dfrac{\del A_{kl}}{\del x_m} +(q-2)(|A|^2 +\delta^2)^{\frac{q-4}{2}}  A_{kl} A_{ij}\dfrac{\del A_{ij}}{\del x_m}\dfrac{\del A_{kl}}{\del x_m}\\
&=(|A|^2 +\delta^2)^{\frac{q-4}{2}}\Big((|A|^2+\delta^2)\dfrac{\del A_{ij}}{\del x_m}\dfrac{\del A_{ij}}{\del x_m}+(q-2)A_{ij} A_{kl}\dfrac{\del A_{ij}}{\del x_m}\dfrac{\del A_{kl}}{\del x_m}\Big).
\end{align*}
Thus, we conclude that
\[
\intO  \divv\big((|\D(\uu)|^2 +\delta^2)^{\frac{q-2}{2}} \D(\uu)\big)\cdot \Delta \uu \,dx\geq \min(1,(q-1))\intO  |\D(\uu)|^2 (|\D(\uu)|^2+\delta^2)^{\frac{q-4}{2}} |\nabla \D(\uu)|^2\,dx.
\]
Finally, from equation \eqref{H2E1}, and the above estimate we deduce
\begin{align*}
\mu \intO &| \Delta \uu |^2\,dx + (\lambda+\mu) \intO  |\nabla \divv \uu |^2 \,dx \\
&+\tau^* \min(1,(q-1))\intO  |\D(\uu)|^2 (|\D(\uu)|^2+\delta^2)^{\frac{q-4}{2}} |\nabla \D(\uu)|^2\,dx\leq -\intO f \cdot \Delta \uu \,dx.
\end{align*}
The right-hand side term is estimated using H\"older and Cauchy-Schwarz inequalities as follows
\[
\intO f \cdot \Delta \uu \,dx \leq \varepsilon \intO |\Delta \uu|^2 \,dx + C(\varepsilon) \intO |f|^2\,dx.
\]
Therefore, we obtain
\begin{equation}\label{est-bef}
(\mu-\varepsilon) \intO | \Delta \uu |^2\,dx + (\lambda+\mu) \intO  |\nabla \divv \uu |^2 \,dx \leq \intO |f|^2\,dx.
\end{equation}
To look at the optimal condition such that the $H^2-$estimate on $\uu$ holds through this method, one has to benefit from the following identity
\[
\curl \curl \uu = \nabla \divv \uu -\Delta \uu,
\]
Consequently, we rewrite estimate \eqref{est-bef} as follows
  \begin{align*}
 (\mu-\varepsilon) \intO | \nabla \curl \uu |^2\,dx + (\lambda+2\mu-\varepsilon) \intO  |\nabla \divv \uu |^2 \,dx \leq \intO |f|^2\,dx.
  \end{align*}
The proof of the second statement is now finished.

We give two different proofs of the last statement. Both of them are based on showing the {\it strong ellipticity} condition\footnote{In some monograph, it is called Legendre-Hadamard condition.} of the linearized operator, say, $\mathcal{A}(\uu^*, \D)$ at a reference solution $\uu^*$, and a perturbation argument. This last step will be not given in details here, however, the details are postponed to the next section.  Indeed, we start by the short proof:

\hfill\break
{\it First proof.} Denoting by $\mathcal{A}(\uu, \D)$ the elliptic operator associated to \eqref{elliptic-ope-GENERAL} (see \eqref{defA} below). We showed in the last statement that the operator $\mathcal{A}(\uu^*, \D)$ acting from\footnote{This fact is true also for the nonlinear operator, however, in order to apply the result by Geymonat \cite{Geym}, the operator must be quasi-linear. }
\[
\mathcal{A}(\uu^*, \D): H^2(\Omega)\rightarrow L^2(\Omega),
\]
is onto and so $\dim (\coker(\mathcal{A}(\uu^*, \D)))=0.$ Moreover, by uniqueness of solutions, we have $\dim (\ker (\mathcal{A}(\uu^*, \D)))=0$ for all $p\geq 2$. Assuming that $\delta\geq c>0$, then it was proved by Geymonat in \cite{Geym} that the index of $\mathcal{A}(\uu^*, \D)$ defined as follows
\[
{\rm index}(\mathcal{A}(\uu^*, \D)):= \dim (\ker(\mathcal{A}(\uu^*, \D)))-\dim (\coker(\mathcal{A}(\uu^*, \D)))
\]
is independent of $p$, $1<p<\infty$. As ${\rm index}(\mathcal{A}(\uu^*, \D))=0$ for $p=2$, then it is equal to zero for all $p\geq 2$. Thus, $\dim (\coker(\mathcal{A}(\uu^*, \D)))=0$ for all $p\geq 2,$ i.e., $\mathcal{A}(\uu^*, \D)$ is onto for all $p\geq 2$, which proves that if $v$ solution of $-\mathcal{A}^*(\uu^*, \D)v=h$ with $h\in L^p(\Omega)$ and $\dashint_\Omega h\,dx=0$, then $v\in W^{2,p}(\Omega)$.
Rewriting \eqref{elliptic-ope-GENERAL} as follows
\[
-\mathcal{A}(\uu^*, \D) \uu=\mathcal{A}(\uu, \D)\uu-\mathcal{A}(\uu^*, \D)\uu +f,
\]
and applying a perturbation argument, we deduce the desired estimate. The details are skipped here. The constant $C$ can be chosen independently of $\uu^*$ due to the uniform continuity of the coefficients, see \cite[Theorem 5.7]{Denk-Book}. Notice that, even though the $H^2$-estimate for the operator $\mathcal{A}(\uu^*, \D)$ holds uniformly with respect to $\delta,$ we have to assume that $\delta \geq c>0$ to fulfill the assumptions imposed on the coefficients in \cite[Theorem 1.1]{Geym}.

\hfill\break
{\it Second proof.}  We shall prove that the linearized operator $\mathcal{A}(\uu^*,\D)$ satisfies  the strong ellipticity condition. Therefore, the regularity property of the nonlinear operator $\mathcal{A}(\uu,\D)$ follows after applying a perturbation argument as it will be shown in the next section. To start, let us  rewrite the elliptic operator as follows
\begin{align*}
\divv \Ss_\delta&=\divv\Big(2\mu \D(\uu)+\lambda\divv \uu\,\mathbb{I}+\tau^* (|\D(\uu)|^2 +\delta^2)^{\frac{q-2}{2}} \D(\uu)\Big)\\
&=\Big(\mu +\dfrac{\tau^*}{2}(|\D(\uu)|^2 +\delta^2)^{\frac{q-2}{2}}\Big)\Delta \uu +\Big(\lambda+\mu +\dfrac{\tau^*}{2}(|\D(\uu)|^2 +\delta^2)^{\frac{q-2}{2}}\Big)\nabla \divv \uu\\
&\quad+\tau^* \dfrac{(q-2)}{2}(|\D(\uu)|^2 +\delta^2)^{\frac{q-4}{2}}\nabla\big( |\D(\uu)|^2\big)\cdot \D(\uu).
\end{align*}
The $j$-component of the last vector on the right-hand side is given as follows
\[
\tau^* (q-2)\underset{k}{\sum}\underset{lm}{\sum} (|\D_{lm}(\uu)|^2 +\delta^2)^{\frac{q-4}{2}} (\D_{lm}(\uu)\del_k \D_{lm}(\uu))\D_{kj}(\uu),
\] 
where we denoted by
$$\del_j (|\D(\uu)|^2)=2\underset{k,l=1}{\overset{n}{\sum}}\D_{kl}(\uu)\del_j \D_{kl}(\uu).$$
Define the function $\beta: \RR^+\rightarrow \RR^+$ as
\begin{equation}\label{def-gamma}
\beta(s)=\mu +\dfrac{\tau^*}{2}s^{\frac{q-2}{2}},
\end{equation}
and denote by $B:= |\D(\uu)|^2 +\delta^2$, then we can write 
$$\divv \Ss_\delta=\beta(B)\Delta \uu +\big(\lambda+\beta(B)\big)\nabla \divv \uu +2\beta^{\prime}(B)\nabla\big( |\D(\uu)|^2\big)\cdot \D(\uu).$$
Then using the fact that $\D$ is symmetric, the $i^{th}$ entry of $\divv \Ss_\delta$ becomes 
\begin{align*}
\big[\divv \Ss_\delta\big]_i&= \underset{k=1}{\overset{3}{\sum}}\big(\beta(B)\del_k^2 \uu_i+(\lambda+\beta(B))\del_i\del_k \uu_k\big)+4\beta^{\prime}\underset{j,k,l=1}{\overset{3}{\sum}}\D_{ij} \D_{kl} \del_j \D_{kl}\\
&=\underset{k=1}{\overset{3}{\sum}}\big(\beta(B)\del_k^2 \uu_i+(\lambda+\beta(B))\del_i\del_k \uu_k\big)+4\beta^{\prime}(B)\underset{j,k,l=1}{\overset{3}{\sum}}\D_{ik} \D_{jl} \del_k\del_l \uu_{j}\\
&=\underset{j,k,l=1}{\overset{3}{\sum}}a_{ij}^{kl}\del_k\del_l\uu_j.
\end{align*}
We have set
\begin{equation}\label{defaij}
a_{i,j}^{k,l}=\beta(B) \delta_{kl}\delta_{ij}+(\lambda+\beta(B))\delta_{il}\delta_{jk}+4\beta^{\prime}(B)\D_{ik}\D_{jl},
\end{equation}
where $\delta_{kl}$ denotes the Kronecker symbol. Define the quasi-linear differential operator $\mathcal{A}(\uu,D)$ as
\begin{equation}\label{defA}
\mathcal{A}(\uu,\D)=\underset{k,l=1}{\sum}A^{k,l}\D_k \D_l,
\end{equation}
where the matrix-valued coefficients
\begin{equation*}
A^{k,l}(\uu)=\big(a_{i,j}^{k,l}\big).
\end{equation*}
Now, if $\uu\in W^{2,6}(\Omega)$, then by Sobolev embedding $\uu \in C^{1}(\overline{\Omega})$,
said the coefficients of the differential operator $\mathcal{A}(x,D)=\mathcal{A}(\uu(x),\D)$ are uniformly bounded continuous since $\delta\geq c>0$. Freezing $\mathcal{A}$ at a reference solution $\uu^*$, we obtain the linear operator
$$\mathcal{A}(\uu^*, \D)=\underset{k,l=1}{\sum}A^{k,l}_*\D_k \D_l.$$
Investigating the above problem as a quasi-linear equation.
For that, we notice that by the definition of $a_{ij}^{kl}$ in \eqref{defaij} and using the sum convention, we have for $\xi \in \mathbb{R}^n$, $\eta \in \mathbb{C}^n, \, |\xi|=|\eta|=1,$ 
\begin{align*}
(\mathcal{A}(x,\xi)\eta,\eta)&=\underset{i,j,k,1=1}{\overset{n}{\sum}}a_{ij}^{kl}\xi_k\eta_l \xi_i \bar{\eta_j}
\\
&=\beta(B) \underset{i,k=1}{\overset{n}{\sum}}\xi_k^2 \eta_i \bar{\eta_i}+(\lambda+\beta(B))\underset{i,k=1}{\overset{n}{\sum}}\xi_i \eta_k\xi_k  \bar{\eta_i}+4\beta^{\prime}(B)\underset{i,k=1}{\overset{n}{\sum}} d_{ik}\xi_k \eta_i \underset{j,l=1}{\overset{n}{\sum}}d_{jl}\xi_l \bar{\eta_j}
\\
&=\beta(B) |\xi|^2 |\eta|^2 +(\lambda+\beta(B))(\xi,\eta)\,(\eta, \xi)+4\beta^{\prime}(B)(\D\xi,\eta ) \overline{(\D\xi,\eta )}\\
&=\beta(B) |\xi|^2 |\eta|^2 +(\lambda+\beta(B))|(\xi,\eta)|^2+4\beta^{\prime}(B)|(\D\xi,\eta)|^2,
\end{align*}
where $(\cdot , \cdot)$ represents the inner product in $\mathbb{C}^{n\times n}$. In particular, we have $(\mathcal{A}(x,\xi)\eta, \eta)$ is real. Evidently, $\mathcal{A}(x,D)$ is strongly elliptic if 
$$\beta(B)>0,\; \lambda+\beta(B)\geq 0 \quad \mbox{and}\quad \beta^{\prime}(B)\geq 0.$$
In the case where $\beta^{\prime}(B)<0$, which corresponds to the case $1\leq q<2$, we infer from the Cauchy-Schwarz inequality 
\begin{align*}
{\rm Re}\, (\mathcal{A}(x, \xi)\eta, \eta)&\geq \beta(B)|\xi|^2 |\eta|^2 +(\lambda+\beta(B) )|(\xi,\eta)|^2+4 \beta^{\prime}(B)|\D(x)|^2|(\xi,\eta)|^2\\
&\geq \beta(B)|\xi|^2 |\eta|^2+\big(\lambda+\beta(B)+4\beta^\prime(B)|\D(x)|^2\big)|(\xi,\eta)|^2.
\end{align*}
Now if we have
$$\lambda+\beta(B) +4 \beta^{\prime}(B)|\D(x)|^2>0,$$
then the operator is strongly elliptic. If else, using again Cauchy-Schwarz inequality and the fact that $|\xi|=|\eta|=1$ we deduce that
\begin{equation*}
\beta(B)>0\quad 2\beta(B)+\lambda+4(B-\delta^2)\beta^{\prime}(B)>0 ,
\end{equation*}
is sufficient to have the strong ellipticity. The above condition is equivalent to
\begin{equation*}
\beta(B)=\mu +\dfrac{\tau^*}{2}B^{\frac{q-2}{2}}>0 \quad\quad  2\beta(B)+\lambda+4(B-\delta^2)\beta^{\prime}(B)=2\mu +\lambda+\tau^*B^{\frac{q-4}{2}}\big((q-1)|\D(\uu)|^2+\delta^2\big)> 0,
\end{equation*}
which is satisfied if we assume that
\begin{equation}\label{cond-strong-ell}
 \mu>0, \quad 2\mu +\lambda>0,\quad q\geq 1, \quad \mbox{and} \quad \delta \geq 0,   
\end{equation}
This ends the proof of Proposition \ref{prop-reg}.
\end{proof}

\begin{remark}
We emphasize that \eqref{cond-strong-ell} is sufficient to ensure the strong ellipticity condition of the linearized operator $\mathcal{A}(\uu^*, \D)$.
However, \eqref{cond-strong-ell} is not sufficient to ensure the uniformly bounded continuous condition of the coefficients of $\mathcal{A}(\uu^*, \D)$, which is a necessary assumption to apply the method introduced in \cite{Denk1-Max-Reg}. When using the energy estimates method, the last condition is, somehow, not involved, and for this reason, we were able to show the first two statements of Proposition \ref{prop-reg} without assuming $\delta$ to be far from zero. Of course, considering $\Omega=\mathbb{T}^3$ played an essential role in the energy estimates method.
\end{remark}
\begin{remark}\label{rem-FS}
Unfortunately, even with the uniform $H^{2}$-estimate of $\uu$ (with respect to $\delta$), we are not able to close the loop used in the proof of the existence of strong solutions to the Power Law model, and, consequently, studying the limit when $\delta$ tends to zero of the sequence $(\rho_\delta, \uu_\delta)$ for $q=1$ in \eqref{Sdelta-PL}. The main problem is to control the density in $L^\infty((0,T)\times \Omega)$, which requires $\int_0^T \| \divv \uu\|_{L^\infty}<+\infty$.  The $W^{2,p}-$estimates solve this issue. However, such estimates are not expected for the solution to the limit system in dimensions greater than or equal to three. In fact, apart from the two-dimensional case,  M. Fuchs and G. Seregin claim that the solution of the steady incompressible Bingham equations is not smooth, in the sense that $\uu$ is not of class $C^1$ near the set $\{x\in \Omega, |\D\uu(x)|=0\}.$ Precisely, there is an open subset $\Omega^*$ of $\Omega$ such that the solution of the steady imcompressible Bingham equations with external force $f\in L^{2, d-2+2\nu}(\Omega)$ belongs to $C^{1,\alpha}(\Omega^*; \RR^d)$ for any $\alpha<\nu$ and $\mathcal{H}^{d-2}(\Omega-\Omega^*)=0$. Here, $\mathcal{H}^{d-2}$ denotes the $(d-2)$ dimensional Hausdorff measure. In particular, if $d=2$ then we have no singular points, i.e. $\Omega^*=\Omega$. Our objective in a forthcoming paper \cite{altaki-bing2} is to extend the tools developed in \cite{Fuchs-Seregin-Book} to the unsteady compressible flow and prove the existence of solutions to the Bingham system in the two-dimensional setting.
\end{remark}
\section{Power law model: Proof of Theorem \ref{theo1}}\label{Sec-The1}
This section is devoted to prove Theorem \ref{theo1}. 
In this proof, we shall deal with several difficulties. First, the initial density may vanish in an open subset of $\Omega$, i.e. an initial vacuum may exist. So, we can perturb the initial density so that we avoid the vacuum to bring us difficulties. Second, as we are looking for strong solutions, then it is natural to establish higher estimates on the approximate solutions. We shall use our results established in Section \ref{Sec-NES}. 
 This section is then organized as follows. In the first subsection, we shall establish the necessary a priori estimates on the approximate solutions $(\rho_\delta, \uu_\delta)$. After that, we deal with the construction of such approximate solutions. In the last subsection, we prove the uniqueness of solutions.  
\subsection{A priori estimates on $(\rho_\delta, \uu_\delta)$.}
In this subsection, we will assume that the system \eqref{Power-Law}-\eqref{Sdelta-PL} has a smooth solution $(\rho_\delta,\uu_\delta)$
provided that the following compatibility condition on the initial data $(\rho_{0}^\delta,\uu_{0}^\delta)$ holds
\begin{equation}\label{comp-13'}
-\divv\Big(2\mu \D(\uu_{0}^\delta)+\lambda\divv \uu_{0}^\delta\,\mathbb{I}+\tau^* (|\D(\uu_{0}^\delta)|^2 +\delta^2)^{\frac{q-2}{2}} \D(\uu_{0}^\delta)\Big)+\nabla p_{0}^\delta=\sqrt{\rho_{0}^\delta}\,g\quad \mbox{for a.e.}\quad x\in \Omega,
\end{equation}
where $g$ is a function in $L^{6}(\Omega)$. We shall then establish some a priori estimates on it in higher norms. We define the following function
\begin{equation}\label{14-ch}
\psi(t)=1+\|\nabla \uu_\delta\|_{L^{2}(\Omega)}^{2}+||\sqrt{\rho_\delta}\uu_t||_{L^{2}(\Omega)}^{2}+||p_\delta(t)||_{W^{1,6}(\Omega)}.
\end{equation}
As was shown in \cite{Kim-SNS}, the establishing of the a priori estimates will be based on the boundedenes of the function $\psi(t)$. Observe that, by the usual Sobolev inequality
\[
\| \rho_\delta(t) \|_{L^{\infty}(\Omega)}+ \| p_\delta(t) \|_{L^{\infty}(\Omega)}\leq C  \psi(t).
\]  
Notice that in all the estimates established below, we will denote by $C$ or $c$ a generic constant depending on $T$ and the norms of the data, however, it does not depend on the parameter $\delta$. Just the notation $C(\delta^{-1})$ concerns a constant who may have an unfavorable effect when $\delta$ becomes close to zero. This notation appears when we are looking for the $L^p-$estimates on the elliptic equation \eqref{ellip-prob}. We emphasize that in one-dimensional space, all the estimates established in this section holds uniformly with respect to $\delta$ due to the first statement in Proposition~\ref{prop-reg}. This fact is crucial to study the limit of the sequence $(\rho_\delta,\uu_\delta)$ when $\delta$ tends to zero; see Section \ref{Sec-theo2}.

For the sake of simplicity, we will drop the index $\delta$ on all the equations and estimates written in this subsection.

In the sequel, we keep using the following notation
\begin{equation}\label{def-B}
B:=|\D(\uu)|^2+\delta^2.
\end{equation}
\hfill \break
{\bf Step 1}.\textit{ Basic energy estimate.}
We begin with elementary considerations. By virtue of the continuity equation \eqref{Power-Law}$_1$, we deduce the conservation of mass
\begin{equation}\label{cons-mas}
\int_{\Omega}\rho (t)\,dx=\int_{\Omega}\rho_{0}\,dx\;\quad(t\geq 0).
\end{equation}
Next, taking the scalar product of the momentum equation \eqref{Power-Law}$_2$ by $\uu$, integrating by parts over $\Omega,$ and then using the continuity equation \eqref{Power-Law}$_1$, we obtain
\begin{equation}\label{ener-est}
\dfrac{1}{2}\dfrac{d}{dt}\intO( \rho |\uu|^{2}+\dfrac{a}{\gamma-1}\rho^{\gamma})\,dx+\mu \intO |\nabla \uu|^{2}\,dx+(\lambda+\mu)\intO |\divv\uu|^2\,dx +\tau^*\int_{\Omega}B^{\frac{q-2}{2}}|\D(\uu)|^{2}\,dx= \intO \rho f\cdot \uu\,dx.
\end{equation}
Moreover, it follows by virtue of H\"{o}lder inequality that
\begin{equation*}
\intO \rho f\cdot \uu\,dx\leq ||f||_{L^{\frac{2\gamma}{\gamma-1}}(\Omega)}||\rho||_{L^{\gamma}(\Omega)}^{1/2}||\sqrt{\rho}\uu||_{L^{2}(\Omega)}\leq C ||f||_{L^{\frac{2\gamma}{\gamma-1}}(\Omega)}(1+||\rho||_{L^{\gamma}(\Omega)}^{\gamma}+||\sqrt{\rho}\uu||_{L^{2}(\Omega)}^{2}).
\end{equation*}
Now we apply Gronwall's inequality to deduce the following energy estimate
\begin{equation}\label{eq15-c}
\underset{0\leq t\leq T}{\sup} \intO(\rho |\uu|^{2}+p)\,dx+\intTO |\nabla \uu|^{2}\,dx\,dt\leq C.
\end{equation}
Independently, using Sobolev inequality we have
\begin{equation}\label{ch-17}
||\rho(t)||_{L^{q}(\Omega)}+||p(t)||_{L^{q}(\Omega)}\leq C \psi(t)\;\;\;{\rm for \;all}\;\;\;1\leq q\leq \infty.
\end{equation} 
\hfill \break
{\bf Step 2}. \textit{Estimate for $||\nabla \uu||_{L^{2}}$.} Taking the scalar product of the momentum equation \eqref{Power-Law}$_{2}$ by $\uu_t$, integrating over $\Omega$, we obtain
\begin{align}\label{188-ch}
\begin{split}
\int_{\Omega}\rho &|\uu_t|^{2}\,dx+\dfrac{1}{2}\dfrac{d}{dt}\int_{\Omega}\big(\mu |\nabla \uu|^{2}+(\lambda+\mu)(\divv \uu)^2+\dfrac{2}{q}\tau^* B^{\frac{q}{2}}\big)\,dx\\
&=\intO \big(\rho f-\rho\uu\cdot\nabla \uu\big)\cdot\uu_t\,dx-\intO \nabla p\cdot \uu_t\,dx.
\end{split}
\end{align}
Using H\"older and Young inequalities, we estimate the first term on the right-hand side of equation \eqref{188-ch} as follows
\begin{align*}
\intO \big(\rho f-\rho\uu\cdot\nabla \uu\big)\cdot\uu_t\,dx\leq  \int_{\Omega} (\rho|f|^{2}+\rho|\uu|^{2}|\nabla \uu|^{2})\,dx+\dfrac{1}{2}\intO \rho |\uu_t|^2\,dx.
\end{align*}
Thus after making an integration by parts on the last term of equation \eqref{188-ch}, we can write
\begin{align}\label{18-ch}
\begin{split}
\dfrac{1}{2}\int_{\Omega}&\rho|\uu_t|^{2}\,dx+\dfrac{1}{2}\dfrac{d}{dt}\int_{\Omega}\big(\mu |\nabla \uu|^{2}+(\lambda+\mu)(\divv \uu)^2+\dfrac{2}{q}\tau^* B^{\frac{q}{2}}\big)\,dx\\
&\leq \int_{\Omega} (\rho|f|^{2}+\rho|\uu|^{2}|\nabla \uu|^{2})\,dx+\int_{\Omega}p\divv \uu_{t}\,dx\\
&\leq \int_{\Omega} (\rho|f|^{2}+\rho|\uu|^{2}|\nabla \uu|^{2})\,dx+\dfrac{d}{dt}\int_{\Omega}p\divv \uu\,dx-\int_{\Omega}p_{t}\divv \uu\,dx.
\end{split}
\end{align}
Using the continuity equation, we have
\begin{equation}\label{2.8-d}
p_{t}+\divv(p\uu)+(\gamma-1)p\divv \uu=0,
\end{equation}
and hence
\begin{align*}
\int_{\Omega}p_{t}\divv \uu\,dx&=-\int_{\Omega}(\divv (p\uu))\divv \uu\,dx-(\gamma-1)\int_{\Omega}p(\divv \uu)^{2}\,dx\\
&=\int_{\Omega}(p\uu\cdot\nabla \divv \uu-(\gamma-1)p(\divv \uu)^{2})\,dx.
\end{align*}
Substituting this identity into equation \eqref{18-ch}, integrating over $(0,t)$, one obtains  
\begin{equation}\label{19ch}
\begin{split}
 \int_{0}^{t}\int_{\Omega}&\rho \uu_t^2\,dx\,ds+ \int_{\Omega} |\nabla \uu(t)|^2\,dx \\
& \leq C + C\intO p(t)^2\,dx +C\int_0^t \psi(s)\,ds\\
& \quad+C\int_{0}^{t}\int_{\Omega}\big(\rho|\uu|^2|\nabla \uu|^2 +p|\uu||\nabla \divv \uu|
+p(\divv \uu)^2\big)\,dx\,ds.
\end{split}
\end{equation}
Again in view of \eqref{eq15-c} and \eqref{ch-17}, we conclude that
\begin{equation*}
\intO p(t)^2\,dx = \intO p(0)^2 + \int_0^t\frac{\partial}{\partial s}\biggl{(}\intO p^2\,dx\biggr{)}\,ds
\le C + C\int_0^t\psi(s)^4\,ds.
\end{equation*}
In order to estimate the remaining terms in \eqref{19ch}, one has to use the elliptic estimates proved in Proposition~\ref{prop-reg}. Indeed, we know that $\uu$ satisfies the following equation
\begin{equation}\label{ellip-prob}
-\divv (\Ss_\delta) =-\rho \uu_t-\rho (\uu\cdot\nabla \uu)+\rho f-\nabla p.
\end{equation}
By virtue of Proposition \ref{prop-reg}, we get
\begin{align*}
\|\nabla^{2}\uu\|_{L^{2}(\Omega)}&\leq  C(\delta^{-1})(\|\rho \uu_t\|_{L^{2}(\Omega)}+\|\rho \uu\cdot\nabla \uu\|_{L^{2}(\Omega)}+\|\rho f\|_{L^{2}(\Omega)}+\|\nabla p\|_{L^{2}(\Omega)})\\
&\leq C(\|\rho \|_{L^{\infty}(\Omega)}^{1/2}\|\sqrt{\rho}\uu_t\|_{L^{2}(\Omega)}+\|\rho\|_{L^{\infty}(\Omega)}\|\uu\|_{L^{6}(\Omega)}\|\nabla \uu\|_{L^{3}(\Omega)}+\|\rho\|_{L^{\infty}(\Omega)}\|f\|_{L^{2}(\Omega)}+\|\nabla p\|_{L^{2}(\Omega)}).
\end{align*}
Using Sobolev and H\"older inequalities, we get
\begin{align*}
\|\nabla^{2}\uu\|_{L^{2}(\Omega)}&\leq C(\delta^{-1})\big(\|\sqrt{\rho}u_t\|_{L^2(\Omega)}^2+\|\rho\|_{L^\infty(\Omega)}+\|\nabla p\|_{L^2(\Omega}\big)+C \|\rho\|_{L^{\infty}(\Omega)}\|\nabla \uu\|_{L^{2}(\Omega)}^{3/2}\|\nabla \uu\|_{H^{1}(\Omega)}^{1/2}.
\end{align*}
Using Young's inequality, we deduce 
\begin{equation}\label{ch-22}
\|\nabla \uu(t)\|_{H^{1}(\Omega)}=(\|\nabla \uu\|_{L^{2}(\Omega)}^2+\|\nabla^{2} \uu\|_{L^{2}(\Omega)}^{2})^{1/2}\leq C(\delta^{-1})\, \psi(t)^{7/2}.
\end{equation}
We come back now to estimate the remaining integrals on the right-hand side of \eqref{19ch}. We have
\begin{align*}
&\int_{0}^{t}\int_{\Omega}\big(\rho |\uu|^2|\nabla \uu|^{2}+p|\uu||\nabla \divv \uu|+p|\divv \uu|^{2}\big)\,ds\\
&\leq C\int_{0}^{t}\big(\|\rho\|_{L^{\infty}(\Omega)}\|\uu\|_{L^{6}(\Omega)}^{2}\|\nabla \uu\|_{L^{3}(\Omega)}^{2}+\|p\|_{L^{3}(\Omega)}\|\uu\|_{L^{6}(\Omega)}\|\nabla \divv \uu\|_{L^{2}(\Omega)}\\
&\qquad\quad+\|p\|_{L^{3}(\Omega)}\|\divv \uu\|_{L^{2}(\Omega)}\|\divv \uu\|_{L^{6}(\Omega)}\big)\,ds\\
&\leq C\int_{0}^{t}\big(\|\rho\|_{L^{\infty}(\Omega)}\|\nabla \uu\|_{L^{2}(\Omega)}^{3}\|\nabla \uu\|_{H^{1}(\Omega)}+\|p\|_{L^{3}(\Omega)}\|\nabla \uu\|_{L^{2}(\Omega)}\|\nabla \uu\|_{H^{1}(\Omega)}\big)\,ds\\
&\leq C\int_0^t \psi(s)^6 \,ds.
\end{align*}
Substituting this estimate and estimate \eqref{ch-22} into \eqref{19ch}, we conclude that
\begin{equation}\label{23ch}
\|\nabla \uu(t)\|_{L^{2}(\Omega)}^{2}\leq C(\delta^{-1})+C(\delta^{-1}) \int_{0}^{t}\psi(s)^{6}\,ds.
\end{equation}
\hfill \break
{\bf Step 3}. \textit{Estimate for $\|\sqrt{\rho} \uu_t\|_{L^{2}(\Omega)}$}.
Let us begin by proving the following lemma which will be useful in the sequel.
\begin{lemma}\label{BS_lem2} For $\uu$ sufficiently regular, we have
\begin{align*}
\mathbb{J}:=-\int^t_0\int_{\Omega}\uu_t\cdot{\rm div}\Big( B^{\frac{q-2}{2}}\D(\uu)\Big)_t
\,dx\,dt\geq0\quad \mbox{for all}\quad q\geq 1.
\end{align*}
\begin{proof}
Through symmetry and integral by parts allow us to write
\begin{align*}
\mathbb{J}&=\int^t_0\int_{\Omega}\partial_j \uu_t\cdot\big( B^{\frac{q-2}{2}}  D_{ij}(\uu) \big)_t\,dx\,dt\\
&=\int^t_0\int_{\Omega}B^{\frac{q-2}{2}}\del_{j}\uu_{t}:\D_{ij}(\uu_t)\,dx\,dt+\dfrac{q-2}{2}\int^t_0\int_{\Omega} B^{\frac{q-4}{2}}(|\D_{ij}(\uu)|^{2})_{t} \D_{ij}(\uu):\del_{j}\uu_{t}\,dx\,dt\\
&=\int^t_0\int_{\Omega} B^{\frac{q-2}{2}}\del_{j}\uu_{t}:\D_{ij}(\uu_{t})\,dx\,dt+(q-2)\int^t_0\int_{\Omega}B^{\frac{q-4}{2}}|\D_{ij}(\uu)|^{2} \D_{ij}(\uu_t):\del_{j}\uu_t\,dx\,dt\\
&\geq \int^t_0\int_{\Omega} B^{\frac{q-4}{2}}\big((q-1)|\D(\uu)|^2+\delta^2  \big)|\D_{ij}(\uu_t)|^2\,dx\geq 0,
\end{align*}
for all $q\geq 1.$ The proof of Lemma \ref{BS_lem2} is now finished.
\end{proof}
\end{lemma}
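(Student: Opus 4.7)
The plan is to integrate by parts in the spatial variable to move the divergence off $\uu_t$, then exploit the symmetry of $\D(\uu)$ to replace the full gradient by its symmetric part, and finally expand the time derivative by the product rule to reduce the problem to a pointwise algebraic inequality in $\D(\uu_t)$. Since $\Omega=\mathbb{T}^d$ is periodic, no boundary contribution arises, and I would write
\[
\mathbb{J}=\int_0^t\!\!\int_\Omega \del_j(\uu_t)_i\,\bigl(B^{(q-2)/2}\D_{ij}(\uu)\bigr)_t\,dx\,dt
=\int_0^t\!\!\int_\Omega \D(\uu_t):\bigl(B^{(q-2)/2}\D(\uu)\bigr)_t\,dx\,dt,
\]
where the second equality uses that $B^{(q-2)/2}\D(\uu)$ is symmetric, so only the symmetric part of $\nabla \uu_t$ survives the contraction.

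Next I would apply the product rule. Since $B_t = 2\,\D(\uu):\D(\uu_t)$, differentiation gives
\[
\bigl(B^{(q-2)/2}\D(\uu)\bigr)_t = B^{(q-2)/2}\,\D(\uu_t) + (q-2)\,B^{(q-4)/2}\bigl(\D(\uu):\D(\uu_t)\bigr)\D(\uu),
\]
and contracting with $\D(\uu_t)$ yields the pointwise identity
\[
\D(\uu_t):\bigl(B^{(q-2)/2}\D(\uu)\bigr)_t = B^{(q-2)/2}|\D(\uu_t)|^2 + (q-2)\,B^{(q-4)/2}\bigl(\D(\uu):\D(\uu_t)\bigr)^2.
\]
It then suffices to prove that this expression is non-negative pointwise for every $q\geq 1$.

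The main obstacle is the regime $1\leq q<2$, in which the cross term has the wrong sign. To handle it I would invoke the Cauchy--Schwarz inequality $\bigl(\D(\uu):\D(\uu_t)\bigr)^2 \leq |\D(\uu)|^2\,|\D(\uu_t)|^2$; since $q-2\leq 0$, this gives the lower bound
\[
\D(\uu_t):\bigl(B^{(q-2)/2}\D(\uu)\bigr)_t \geq B^{(q-4)/2}\Bigl[\bigl(|\D(\uu)|^2+\delta^2\bigr)+(q-2)|\D(\uu)|^2\Bigr]|\D(\uu_t)|^2 = B^{(q-4)/2}\bigl[(q-1)|\D(\uu)|^2+\delta^2\bigr]|\D(\uu_t)|^2,
\]
which is manifestly non-negative as soon as $q\geq 1$. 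For $q\geq 2$ both terms in the product-rule expansion are already non-negative, so the same conclusion holds trivially. Integrating the pointwise bound over $(0,t)\times\Omega$ then delivers $\mathbb{J}\geq 0$ and completes the argument.
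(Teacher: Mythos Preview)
Your proof is correct and follows essentially the same route as the paper: integrate by parts, use the symmetry of $\D(\uu)$ to replace $\nabla\uu_t$ by $\D(\uu_t)$, expand the time derivative via the product rule, and then bound the cross term using Cauchy--Schwarz to arrive at the pointwise lower bound $B^{(q-4)/2}\bigl[(q-1)|\D(\uu)|^2+\delta^2\bigr]|\D(\uu_t)|^2\ge 0$. Your exposition is in fact cleaner---the paper leaves the Cauchy--Schwarz step implicit and has some notational slips---but the argument is the same.
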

We start by writing the momentum equation in a non-conservative form
\begin{equation}\label{no_cons-mom}
\rho \del_t\uu +(\rho \uu\cdot \nabla) \uu -\mu \Delta \uu -(\lambda+\mu)\nabla \divv \uu -\tau^* \divv\big(B^{\frac{q-2}{2}}\D(\uu)\big)+\nabla p=\rho f.
\end{equation}
By differentiating \eqref{no_cons-mom} with respect to time, we get
\begin{align*}
\rho \uu_{tt}&+\rho \uu\cdot\nabla \uu_t-\mu\Delta \uu_t-(\lambda+\mu)\nabla \divv \uu_t-\tau^*{\rm div}\big(B^{\frac{q-2}{2}}\D(\uu)\big)_t+\nabla p_t\\
&=(\rho f)_t-\rho_t(\uu_t+\uu\cdot \nabla \uu)-\rho \uu_t\cdot\nabla \uu.
\end{align*}
Taking the scalar product of the above equation by $\uu_t$ and
integrate the resulting over $\Omega$, one obtains after integration by parts
\begin{equation}\label{25-ch}
\begin{split}
\frac{1}{2}\dfrac{d}{dt}&\intO\rho|\uu_t|^2\,dx
+\mu\intO|\nabla \uu_t|^2\,dx +(\lambda+\mu)\intO |\divv \uu_t|^2\, dx+ \tau^*\mathbb{J}-\intO  p_t \divv \uu_t\,dx \\
&=-\intO \rho_t \big(\uu_t+\uu\cdot \nabla \uu-f\big)\cdot\uu_t\,dx -\intO(\rho \uu_t\cdot\nabla \uu)\cdot \uu_t\,dx+\intO\rho f_t\cdot \uu_t\, dx  
\\
& = - \intO\rho \uu\cdot\nabla((\uu_t+\uu\cdot \nabla \uu-f)\cdot \uu_t)\,dx-\intO(\rho \uu_t\cdot\nabla \uu)\cdot \uu_t\,dx+\intO\rho f_t\cdot \uu_t\, dx.
\end{split}
\end{equation}
Using equation \eqref{2.8-d}, we can write
\begin{align*}
-\intO p_t\divv \uu_t\,dx &= \intO (\nabla p\cdot \uu +\gamma p\divv \uu)\divv \uu_t\,dx\\
&=\intO (\nabla p\cdot \uu )\divv \uu_t\,dx+\dfrac{\gamma}{2}\Dt\intO p(\divv\uu)^2\,dx-\dfrac{\gamma}{2}\Dt \intO p_t (\divv \uu)^2\,dx\\
& = \frac{d}{dt}\intO\frac{\gamma}{2}p(\divv \uu)^2dx+\intO\nabla p\cdot (\uu\divv \uu_t)\,dx\\
&\quad +\frac{\gamma}{2}\Big(\intO-p \uu\cdot\nabla(\divv \uu)^2\,dx+(\gamma-1)\intO p(\divv \uu)^3\,dx\Big).
\end{align*}
Substituting this identity into \eqref{25-ch}, we obtain 
\begin{equation}\label{26ch}
\begin{split}
\frac{d}{dt}\intO &\left(\frac{1}{2}\rho|\uu_t|^2+\frac{\gamma}{2}p (\divv \uu)^2 \right)\,dx+\mu \intO |\nabla \uu_t|^2\,dx+(\lambda+\mu)\intO|\divv \uu_t|^2\,dx+\tau^*\mathbb{J} \\
& \le  C\intO\Big(|\rho| |\uu||\uu_t||\nabla \uu_t|+|\rho| |\uu||\uu_t|| \nabla \uu|^2+ |\rho| |\uu|^2|\uu_t||\nabla^2 \uu|+|\rho| |\uu|^2|\nabla \uu|| \nabla \uu_t|\\
&\quad \quad +|\rho| |\uu_t|^2|\nabla \uu|+ |\nabla p||\uu||\nabla \uu_t|+|p||\uu||\nabla \uu||\nabla^2 \uu|+|p||\nabla \uu|^3\\
&\quad \quad +|\rho||\uu||\uu_t||\nabla f|+|\rho||\uu||f||\nabla \uu_t|+|\rho||\uu_t||f_t|\Big)\,dx:=\sum_{k=1}^{11} I_k. \end{split}
\end{equation}
We shall estimate each term $I_k$ by making extensive use of Sobolev inequality, H\"older inequality and estimates of Step 2. This procedure is extremely  similar to \cite{Kim-SNS}, but we prefer performing the calculus in the Appendix Section \ref{Appen-sec} for the sake of completeness.  Substituting all the estimates established in  Section \ref{Appen-sec}  into \eqref{26ch}, we get
\begin{equation}
\dfrac{1}{2}\Dt \intO \Big(\rho|\uu_t|^{2}+\gamma p(\divv \uu)^{2}\Big)\,dx+ \intO |\nabla \uu_t|^{2}\,dx\leq C\psi^{11}(t)+C(\|f\|_{H^{1}(\Omega)}^{2}+\|f_t\|_{L^{2}(\Omega)}^{2}).
\end{equation}
Thus integrating over $(\tau,t)\subset\subset (0,T),$ we also obtain
\begin{equation}
\begin{split}\label{27-ch}
\dfrac{1}{2}\intO &\rho |\uu_t|^{2}(t)\,dx+ \int_{\tau}^{t}\intO |\nabla \uu_t|^{2}\,dx\,ds\\
&\leq C+\dfrac{1}{2}\intO \Big(\rho |\uu_t|^{2}(\tau)+\gamma p (\divv \uu)^{2}(\tau)\Big)\,dx+C\int_{\tau}^{t} \psi(s)^{11}\,ds.
\end{split}
\end{equation}
In the meantime, we can test the momentum equation by $\uu_t$ to obtain
\begin{equation*}
\intO \rho |\uu_t|^{2}\,dx\leq 2 \intO\Big(|\rho||\uu|^{2}|\nabla \uu|^{2}+|\rho||f|^{2}+|\rho^{-1}||\divv\big(2\mu \D(\uu)+\lambda\divv \uu\,\mathbb{I}+\tau^* B^{\frac{q-2}{2}}\D(\uu)\big)-\nabla p|^{2}\Big)\,dx.
\end{equation*} 
Therefore, letting $\tau\rightarrow 0^+$ in inequality \eqref{27-ch}, we conclude that
\begin{equation}\label{ch-28}
\|\sqrt{\rho}\uu_{t}(t)\|_{L^{2}(\Omega)}^{2}+\int_{0}^{\tau}\|\nabla \uu_{t}(s)\|_{L^{2}(\Omega)}^{2}\,ds\leq C +C(\rho_0,\uu_0)+ C\int_{0}^{t}\psi(s)^{11}\,ds
\end{equation}
where 
\begin{equation*}
C(\rho_0, \uu_0)=\intO \rho_{0}^{-1}\big|\divv\big(2\mu \D(\uu_0)+\lambda\divv \uu_0\,\mathbb{I}+\tau^* (|\D(\uu_0)|^2 +\delta^2)^{\frac{q-2}{2}}\D(\uu_0)\big)-\nabla p_0\big|^{2}\,dx=\|g\|_{L^{2}(\Omega)}^2
\end{equation*}
which is finite due to the compatibility condition \eqref{comp-13}.

\hfill \break
{{\bf Step 4}. \it Estimate for $\|p\|_{W^{1,6}(\Omega)}$.}
Recall that from the continuity equation, we have
\begin{equation}\label{cont-pressure}
p_t+\divv(p\uu)+(\gamma-1)p\divv \uu=0.
\end{equation}
Differentiating it with respect to $x_k$ yields
\begin{equation*}
(p_{x_k})_t+\uu\cdot \nabla p_{x_k}+\nabla p\cdot \uu_{x_k}+\gamma p_{x_k}\divv \uu +\gamma p\divv \uu_{x_k}=0.
\end{equation*}
Now, we multiply the above equation by $p_{x_k}|p_{x_k}|^4$ and we integrate over $\Omega.$ After summing  over $k$, we get
\begin{equation*}
\begin{split}
\frac{d}{dt}\|\nabla p\|^{6}_{L^{6}(\Omega)}
 &\leq  C\intO |\nabla \uu||\nabla p|^6\,dx+\intO p|\nabla \divv \uu||\nabla p|^{5}\,dx\\
& \leq  C \big( \|\nabla \uu\|_{L^{\infty}(\Omega)}\|\nabla p\|_{L^6 (\Omega)}^6+\|p\|_{L^\infty(\Omega)} \|\nabla \divv \uu\|_{L^6(\Omega)}\|\nabla p\|_{L^6 (\Omega)}^{5}\big)
\end{split}
\end{equation*}
which implies that
\begin{equation*}
\frac{d}{dt}\|\nabla p\|_{L^6(\Omega)}
 \le   C( \|\nabla \uu\|_{L^\infty(\Omega)}\|\nabla p\|_{L^6 (\Omega)}+\|p\|_{L^\infty (\Omega)} \|\nabla^2 \uu\|_{L^6 (\Omega)}).
\end{equation*} 
Similarly, we can derive a bound on $\|p\|_{L^{6}(\Omega)}$ by multiplying \eqref{cont-pressure} by $p$ and integrate over $\Omega$. Thus, we get
\begin{equation*}
\Dt\|p\|_{W^{1,6}(\Omega)}\leq C\| \uu\|_{W^{2,6}(\Omega)}\|p\|_{W^{1,6}(\Omega)}.
\end{equation*}
Therefore, by virtue of Gronwall's inequality we deduce 
\begin{equation}\label{ch-29}
\|p(t)\|_{W^{1,6}(\Omega)}\leq C \;{\rm exp}\Big( \int_0^t \| \uu\|_{W^{2,6}(\Omega)}\,ds\Big).
\end{equation}
In the same manner, we can also show that
\begin{equation*}
\|\rho(t)\|_{W^{1,6}(\Omega)}\leq C \;{\rm exp}\Big(\int_0^t \| \uu\|_{W^{2,6}(\Omega)}\,ds\Big).
\end{equation*}
In order to complete this step, one has to show the $L^p-$estimates on the elliptic operator associated to the system based on the results that we showed in Proposition \ref{prop-reg}. Indeed, we write the momentum equation as follows
\begin{equation}\label{lp-prob}
-\mathcal{A}(\uu^*,\D)\uu=-\rho \uu_t-\rho (\uu\cdot\nabla \uu)+\rho f-\nabla p+(\mathcal{A}(\uu,\D)\uu-\mathcal{A}(\uu^*,\D)\uu)
\end{equation}
where $\uu^*$ is a reference solution. Investigating \eqref{lp-prob} as a quasi-linear elliptic equation, then according to the last statement in Proposition \ref{prop-reg}, we know that the linearized operator still yield maximal $L^p-$regularity. Nevertheless, by definition of the operator $\mathcal{A}$ in \eqref{defA}, we have
\begin{align*}
\mathcal{A}(\uu,\D)\uu-\mathcal{A}(\uu^*,\D)\uu=&(\beta(B)-\beta(B^*))(\Delta \uu+\nabla \divv \uu) \\
&+4\underset{j,k,l=1}{\overset{3}{\sum}}(\beta^\prime(B)\D_{ik}(\uu)\D_{jl}(\uu)-\beta^\prime(B^*)\D_{ik}(\uu^*)\D_{jl}(\uu^*))\del_k \del_l\uu_j,
\end{align*}
where we denoted by $B^*=|\D(\uu^*)|^2+\delta^2$, and remember that $B$ and $\beta(\cdot)$ were defined in \eqref{def-gamma}. Therefore, we have
\begin{equation}\label{difA-A*-est}
\begin{split}
\|& \mathcal{A}(\uu,\D)\uu-\mathcal{A}(\uu^*,\D)\uu \|_{L^p(\Omega)}\\
&\leq C \| |\D(\uu)|^2 - |\D(\uu^*)|^2\|_{L^{\infty}(\Omega)} \| \uu \|_{W^{2,p}(\Omega)}\\
&+ C\underset{k,l=1}{\overset{3}{\sum}} \| \big|\D_{ik}(\uu)\D_{jl}(\uu)-\D_{ik}(\uu^*)\D_{jl}(\uu^*)\|_{L^{\infty}(\Omega)}\| \uu \|_{W^{2,p}(\Omega)}\\
&+4 C\underset{k,l=1}{\overset{3}{\sum}} \| |(\D(\uu)|^2 - |\D(\uu^*)|^2)\D_{ik}(\uu^*)\D_{jl}(\uu^*)\|_{L^{\infty}(\Omega)}\| \uu \|_{W^{2,p}(\Omega)}\\
&\leq C  \| |\D(\uu)| - |\D(\uu^*)|\|_{L^{\infty}(\Omega)} \| |\D(\uu)| + |\D(\uu^*)|\|_{L^{\infty}(\Omega)}\| \uu \|_{W^{2,p}(\Omega)}\\
&+C\underset{k,l=1}{\overset{3}{\sum}}\big(|\D_{ik}(\uu)|_{L^\infty(\Omega)}\| |\D_{jl}(\uu)| - |\D_{jl}(\uu^*)|\|_{L^{\infty}(\Omega)}+ |\D_{jl}(\uu)|_{L^\infty(\Omega)}\| |\D_{ik}(\uu)| - |\D_{ik}(\uu^*)|\|_{L^{\infty}(\Omega)}\big) \| \uu \|_{W^{2,p}(\Omega)}\\
&+\| |(\D(\uu)| - |\D(\uu^*)|)\|_{L^{\infty}(\Omega)} \|\D_{ik}(\uu^*)\D_{jl}(\uu^*)\|_{L^{\infty}(\Omega)}\| |\D(\uu)| + |\D(\uu^*)|\|_{L^{\infty}(\Omega)}\| \uu \|_{W^{2,p}(\Omega)}\\
&\leq C \| \nabla(\uu-\uu^*)\|_{L^{\infty}(\Omega)}\| \uu\|_{W^{2,p}(\Omega)}.
\end{split}
\end{equation} 
Thus, if $\uu$ represents a small perturbation of $\uu^*$, we deduce from \eqref{lp-prob} the following estimate
\begin{equation}
\|\uu \|_{W^{2,p}(\Omega)}\leq C(\delta^{-1}) \|-\rho \uu_t-\rho (\uu\cdot\nabla \uu)+\rho f -\nabla p\|_{L^{p}(\Omega)}+\varepsilon\|\uu\|_{W^{2,p}(\Omega)},
\end{equation}
and whence\footnote{The estimates \eqref{Lp-ch}-\eqref{ch30}-\eqref{31-ch}-\eqref{ch-32}-\eqref{33-ch} hold uniformly with respect to $\delta$ in one dimensional space due to the first statement in Proposition \ref{prop-reg}.}
\begin{equation}\label{Lp-ch}
\begin{split}
\int_0^t \| \uu\|_{W^{2,6}(\Omega)}&\leq C(\delta^{-1}) \int_0^t \big(\|\rho \uu_t \|_{L^6(\Omega)}+\|\rho \uu\cdot \nabla \uu \|_{L^6(\Omega)}+\|\rho f \|_{L^6(\Omega)}+\|\nabla p \|_{L^6(\Omega)}  \big)ds\\
&\leq C(\delta^{-1}) \int_0^t \big(\|\rho \|_{L^{\infty}(\Omega)}\|\nabla \uu_t\|_{L^2(\Omega)}+\|\rho \|_{L^{\infty}(\Omega)} \|\nabla \uu \|^2_{L^{6}(\Omega)}+\|\rho \|_{L^{\infty}(\Omega)}\|f \|_{H^{1}(\Omega)}+\|\nabla p \|_{L^{6}(\Omega)})\\
&\leq  C(\delta^{-1})+C(\delta^{-1})\int_0^t (\psi(s)^{8}+\|\nabla \uu_{t}(s)|_{L^{2}(\Omega)}^{2})\,ds.
\end{split}
\end{equation}
Now, using estimates \eqref{ch-22},  \eqref{ch-28}, and \eqref{Lp-ch}, we infer from \eqref{ch-29} the following
\begin{equation}\label{ch30}
\|p(t)\|_{W^{1,6}(\Omega)}\leq C(\delta^{-1}) \;{\rm exp}\Big(c(\rho_0,\uu_0)+c\int_0^t \psi(s)^{11}\,ds\Big).
\end{equation}
\hfill \break
{\bf Step 5}. {\it Final estimate.}
Gathering  \eqref{ch-22}, \eqref{23ch}, \eqref{ch-28} and  \eqref{ch-29}, we deduce that
\begin{equation}\label{31-ch}
\psi(t)+\|\nabla \uu(t)\|_{H^{1}(\Omega)}+\int_0^t \|\nabla \uu_{t}(s)\|_{L^{2}(\Omega)}^{2}\,ds\leq  C(\delta^{-1})\;{\rm exp}\Big(\int_0^t \psi^{11}(s)\,ds\Big)
\end{equation}
where $C$ is a constant which may depend on $c(\rho_0,\uu_0)=\|g\|_{L^{2}(\Omega)}^{2}.$ From this estimate \eqref{31-ch}, we can find a small time $T^*>0$ and a constant $C>0$ depending on $a, \gamma, \mu,\lambda, T, \|g\|_{L^{2}(\Omega)}$ and the norms of the data $(\rho_0, \uu_0, f),$ such that
\begin{equation}\label{ch-32}
\underset{0\leq t\leq T^*}{\sup}\big(\|\nabla \uu\|_{H^{1}(\Omega)}+\|\sqrt{\rho} \uu_t\|_{L^{2}(\Omega)}+\|p\|_{W^{1,6}(\Omega)}\big)+\int_0^{T^*} \|\nabla \uu_t\|_{L^{2}(\Omega)}^{2}\,dt\leq  C(\delta^{-1}).
\end{equation}
In the meantime, using Sobolev inequality and estimate \eqref{ch-22}, we deduce from \eqref{ch-32} the following
\begin{equation}\label{33-ch}
\underset{0\leq t\leq T^*}{\sup}\big(\|\uu\|_{L^{6}(\Omega)}+\int_0^{T^*} (\|\uu(t)\|_{W^{2,6}(\Omega)}^{2}+\|\uu_{t}\|_{L^{6}(\Omega)}^{2}\big)\,dt\leq  C(\delta^{-1}).
\end{equation} 
\subsection{Construction of approximate solutions}
Have established the necessary a priori estimates in the previous subsection, we can now proceed to the construction of approximate solutions following the ideas developed for compressible Navier-Stokes equations (case of Newtonian fluids) that can be found in several papers and books, see for instance \cite{p.lions1, Fe-No-book, Fe-No-cons}. Precisely, we construct approximate solutions in an appropriate finite-dimensional space by a fixed-point argument combined with the characteristics method  and the Faeodo-Galerkin method. The existence of solutions will be extended to the complete space with the help of the a priori estimates established in the previous subsection. Since this procedure is mostly classical now, then we will omit some details here.
Indeed, we take our basic function spaces as $X= H^{2}(\Omega)$ and its finite-dimensional subspaces as
\begin{equation*}
X_{m}={\rm span}\{\phi_{1},\cdots,\phi_m\}\subset X\cap C^{2}(\overline{\Omega}),
\end{equation*}
where $\phi_i$ be an orthonormal basis of $L^2(\Omega)$ which is also an orthogonal basis of $H^2(\Omega)$. Such a basis surely exists since $C^{2}(\overline{\Omega})\cap X$ is dense in $X.$ Let $\rho_0, \uu_0$ and $f$ be the data satisfying the hypothesis of Theorem \ref{theo1}. We assume for the moment that $\rho_0 \in C^1(\overline{\Omega})$ satisfying $\rho_0(x) \geq \varepsilon$ for $x\in \Omega$ for some $\varepsilon>0.$ Then, we are looking for  $(\rho_m, \uu_m)$ solution to the following system
\begin{equation}\label{rh.a-BS}
\del_t \rho_m+\divv(\rho_m \uu_m)=0 ,
\end{equation}
\begin{equation}\label{um.ap-BS}
\intO \big(\rho_m \del_t\uu_m+ (\rho_m \uu_m\cdot\nabla )\uu_m  -\divv(\Ss_\delta^m) +\nabla p_m\big)\cdot v\,dx =\intO \rho_m f\cdot v\,dx\qquad v\in X_m,
\end{equation}
\begin{equation}\label{intital-um}
\uu_m(0)=\uu_0^m=\underset{k=1}{\overset{m}{\sum}}\langle \uu_0, \phi^k\rangle_{L^2(\Omega)}\,\phi^k  \quad \mbox{and}\quad \rho_m(0)=\rho_0.
\end{equation}
Following \cite{Fe-No-cons}, we can prove that there exist $(\rho_m, \uu_m)$ solution of the approximate system \eqref{rh.a-BS}-\eqref{um.ap-BS}-\eqref{intital-um}. Indeed, for any given $\uu_m\in C^1([0,T); X_m)$, then by the classical theory of transport equation, there exists a classical solution $\rho_m(t,x)\in C^1([0,T); X_m)$ to \eqref{rh.a-BS} with initial data $\rho_0(x)\geq \varepsilon>0$. Moreover, we can derive the following a priori bound on $\rho_m$ 
\begin{align*}
\underset{x\in \Omega}{\inf}\; \rho_0(x)\exp \Big(-\int_0^t \|\divv \uu_m\|_{L^\infty(\Omega)}\,ds\Big)\leq \rho_m(t,x)\leq \underset{x\in \Omega}{\sup}\; \rho_0(x)\exp \Big(\int_0^t \|\divv \uu_m\|_{L^\infty(\Omega)}\,ds\Big).
\end{align*}
Since we assumed that $\rho_0(x)>0$ for all $x\in \Omega,$ then we deduce that
$$\rho_m(t,x)>0 \quad \mbox{for all}\quad (t,x)\in [0,T)\times \Omega.$$
We introduce the operator 
\begin{align*}
T:C^1([0,T]; X_m) &\go C^1([0,T]; C^{1}(\Omega))\\
\uu_m &\go T(\uu_m)=\rho_m.
\end{align*}
Since the equation for $\rho_m$ is linear, $T$ is Lipschitz continuous in the following sense:
\begin{equation}\label{Ju-3.3}
\|T(v_1)-T(v_2)\|_{C^1([0,T]; C^{1}(\Omega))}\leq  c\|v_1-v_2\|_{C([0,T]; L^{2}(\Omega)}).
\end{equation} 
Next, we wish to solve the momentum equation \eqref{um.ap-BS}. For that we will apply Banach's fixed-point theorem to prove the local-in-time existence of solution. Indeed, we assume that $\rho_m=T(\uu_m)$ is given and we introduce a family of operator defined as follow: for a given function $\varrho \in L^1(\Omega)$ with $\varrho\geq \underline{\vrho}>0:$
\begin{align*}
M[\rho]: X_m \go X_m^* \quad \quad \langle M[\rho] \uu_m, w\rangle=\intO \vrho v\cdot w\,dx\quad v, w\in X_m.
\end{align*}  
These operators are symmetric and positive definite with the smallest eigenvalue
\begin{align*}
\underset{\|w\|_{L^2(\Omega)}=1}{\inf} \langle M[\rho]w,w\rangle=\underset{\|w\|_{L^2(\Omega)}=1}{\inf} \intO \varrho |w|^2\,dx\geq \underset{x\in \Omega}{\inf} \varrho(x)\geq \underline{\varrho}.
\end{align*}
Then, since $X_m$ is finite dimensional, the operators are invertible with 
$$\|M^{-1} [\rho]\|_{\mathcal{L}(X_m^*,X_m)}\leq \underline{\vrho}^{-1},$$
where we denote by $\mathcal{L}(X_m^*,X_m)$ the set of bounded linear mappings from $X_m^*$ to $X_m$. Moreover, see \cite[Chap. 7.3.3]{fe-book}, $M^{-1}$ is Lipschitz continuous in the sense
\begin{equation}\label{3.5-Ju}
\|M^{-1}[\vrho_1]-M^{-1}[\vrho_2]\|_{\mathcal{L}(X_m^*,X_m)} \leq C \|\vrho_1-\vrho_2\|_{L^1(\Omega)}
\end{equation} 
for $\vrho_1, \vrho_2\in L^{1}(\Omega)$ such that $\vrho_1, \vrho_2 \geq \underline{\vrho}>0.$
We are looking for $\uu_m$ solution of the following nonlinear integral equation
$$\uu_m(t)=M^{-1}[T(\uu_m)(t)]\Big(M[\rho_0]\uu_0^m +\int_0^t N[T(\uu_m), \uu_m(s)]\,ds\Big) \quad \mbox{in} \quad X_m$$
where we denote by
\begin{align*}
N(T(\uu_m), \uu_m)=\rho_m f-\divv(\rho_m \uu_m\otimes \uu_m)+\divv(\Ss_\delta^m)-a\nabla \rho^\gamma_m.
\end{align*}
By virtue of estimates \eqref{Ju-3.3} and \eqref{3.5-Ju} for the operators $T$ and $M^{-1}$, we can solve the above nonlinear equation by evoking the fixed-point theorem of Banach on a short time interval $[0, T^*]$ with $T^*<T$ in the space $C([0,T^*], X_m)$.

Up to now, we have proved that there exists $(\rho_m, \uu_m)$ solution to \eqref{rh.a-BS}-\eqref{um.ap-BS}-\eqref{intital-um} with the following properties
  $$\rho_m\in C^{1}([0,T^*); X_m ) \quad \quad \uu_{m}\in C^{1}([0,T^*); X_m).$$
Now, we will show that these approximate solutions satisfy the following uniform estimate (with respect to $m$ and $\varepsilon$) analogue to \eqref{31-ch}
\begin{equation}\label{ch-37}
\begin{split}
\psi_{m}(t)+\|\nabla \uu_m\|_{H^{1}(\Omega)}+\int_0^t \|\nabla (\uu_{m})_{t}\|_{L^{2}(\Omega)}^{2}\,ds  \leq  C \,{\rm exp} \Big(C(\rho^0,\uu^0_m)+C\int_0^t \psi_{m}(t)^{11}\,ds\Big),
\end{split}
\end{equation}
where we have denoted by
\begin{equation}\label{14-ch}
\psi_{m}(t)=1+\|\nabla \uu_m\|_{L^{2}(\Omega)}^{2}+||\sqrt{\rho_m}(\uu_m)_t||_{L^{2}(\Omega)}^{2}+||p_m(t)||_{W^{1,q}(\Omega)}^{2}.
\end{equation}
We start by taking $v=u_m(t)$ in equation \eqref{um.ap-BS}, integrate over $(0,t)$ and apply Gronwall's inequality. Then, we obtain the analogue of estimate \eqref{eq15-c}
\begin{equation}\label{ch-28-dis}
\intO \big( \rho_m(t) |\uu_m(t) |^2 +p_{m}(t)\big)\,dx+\int_{0}^{t}\intO |\nabla \uu_m(s)|^{2}\,dx\,ds\leq C.
\end{equation}
This  implies immediately the analogue of estimate \eqref{ch-17} 
\begin{equation}\label{ch-39}
||\rho_{m}(t)||_{L^{q}(\Omega)}+||p_m(t)||_{L^{q}(\Omega)}\leq C \psi_m(t),\;\;\;{\rm for\;\; all}\;\;\;1\leq q\leq \infty.
\end{equation}
At any time $t,$ taking $v=(\uu_{m})_t$ in equation \eqref{um.ap-BS} yields to
\begin{equation*}
\dfrac{1}{2}\intO \rho_m|\del_t \uu_m |^{2}\,dx+\dfrac{\mu}{2}\dfrac{d}{dt} \intO|\nabla \uu_m|^{2}\,dx
\leq \intO \big(\rho_m|f|^{2}+\rho_m|\uu_m|^2|\nabla \uu_m|^2\big)\,dx +\intO p_m \del_t\divv \uu_{m} \,dx
\end{equation*}
As did in the last subsection, one may establish the $H^2-$estimate on the approximate solution $(\uu_m)$ by taking $v=-\Delta \uu_m$ and follows the same lines as in the second statement of Proposition \ref{prop-reg}.  We deduce 
\begin{equation*}
 \|\nabla^{2}\uu_{m}\|_{L^{2}(\Omega)}\leq C\big( \|\del_{t}(\rho_m \uu_m)\|_{L^{2}(\Omega)}+\|\divv(\rho_m \uu_m\otimes \uu_m)\|_{L^{2}(\Omega)}+\|\nabla p_m\|_{L^{2}(\Omega)}+\| \rho_m f\|_{L^{2}(\Omega)}\big).
\end{equation*}
The analogue of the above estimate, i.e. the $L^p-$estimates could be established as follows. First, we may rewrite \eqref{um.ap-BS} as follows
\begin{align}\label{A-A*}
\begin{split}
\intO -\mathcal{A}(\uu_0^m, \D)\uu \cdot v\,dx =&\intO [\rho_m f- \rho_m \del_t\uu_m - (\rho_m \uu_m\cdot\nabla )\uu_m  -\nabla p_m]\cdot v\,dx\\
&+\intO (\mathcal{A}(\uu, \D)\uu-\mathcal{A}(\uu_0^m, \D)\uu) \cdot v\,dx,
\end{split}
\end{align}
where we remember that $\mathcal{A}$ is defined in \eqref{defA}, and $\mathcal{A}(\uu_0^m, \D)$ is the linearized operator at $\uu_0^m$. Second, we take $v=-|\mathcal{A}(\uu_0^m, \D)|^{p-2}\mathcal{A}(\uu_0^m, \D)$. For the last integral on the right-hand side of equation \eqref{A-A*}, we proceed as in \eqref{lp-prob}-\eqref{difA-A*-est}. At the end, we conclude with the following estimate, which holds uniformly with respect to $m,$ but not uniformly with respect to\footnote{In one dimensional space, this estimate holds uniformly with respect to $\delta$ thanks to Proposition \ref{prop-reg}.} $\delta$
\[
 \|\nabla^{2}\uu_{m}\|_{L^{p}(\Omega)}\leq C(\delta^{-1})\big( \|\del_{t}(\rho_m \uu_m)\|_{L^{p}(\Omega)}+\|\divv(\rho \uu_m\otimes \uu_m)\|_{L^{p}(\Omega)}+\|\nabla p_m\|_{L^{p}(\Omega)}+\|\rho_m f\|_{L^{p}(\Omega)}\big).
\]
In the meantime, we can derive the analogues of \eqref{ch-22} and \eqref{23ch}
\begin{equation}\label{ch40}
\|\nabla \uu_m\|_{H^{1}(\Omega)}\leq C \psi_{m}(t)^{7/2}\quad \mbox{and}\quad \|\nabla \uu_m(t)\|_{L^{2}(\Omega)}^{2}\leq C+C \int_{0}^{t}\psi_{m}(s)^{6}\,dx.
\end{equation}
In addition, differentiating equation \eqref{um.ap-BS} with respect to time and taking $v=(\uu_m)_t$ and apply the same procedure as in the previous subsection, we can infer that
\begin{equation*}
\|\sqrt{\rho}\del_t \uu_{m}(t)\|_{L^{2}(\Omega)}^{2}+\int_{0}^{t}\|\del_t\nabla \uu_{m} (s)\|_{L^{2}(\Omega)}^{2}\,ds\leq C +C(\rho_0,\uu_0^m)+ C\int_{0}^{t}\psi_m(s)^{11}\,ds.
\end{equation*}
Similarly as in estimate \eqref{ch30}, we have also
\begin{equation}\label{ch42}
\|p_m(t)\|_{W^{1,6}(\Omega)}\leq C \;{\rm exp}\Big(C(\rho_0,\uu_0^m)+C\int_0^t \psi_m(s)^{11}\,ds\Big).
\end{equation}
Combining \eqref{ch40} and \eqref{ch42}, we deduce  \eqref{ch-37}. On the other hand, we conclude from \eqref{ch-37} that
\begin{equation}\label{ch43}
\varphi_{m}(t)\leq C \mathcal{C}(\rho_0, \uu^{m}_{0})+C\int_{0}^{t}\exp(C\varphi_{m}(s))\,ds,
\end{equation}
where $\varphi_{m}(t)=\log(\psi_{m}(t)/C).$ Note that since $\mathcal{C}(\rho_0,\uu_0)=\|g\|^2_{L^{2}(\Omega)}$ and 
$$|\mathcal{C}(\rho,\uu_0^m)-\mathcal{C}(\rho_0,\uu_0)|\leq \dfrac{C}{\varepsilon}\|\uu_0^m-\uu_0\|_{H^{2}(\Omega)}\rightarrow 0\;\;{\rm as}\;\;m\rightarrow \infty,$$
there exists a large integer $M=M(\varepsilon)>0$ such that $\mathcal{C}(\rho_0,\uu_0^m)\leq \|g\|_{L^{2}(\Omega)}^{2}+1$ for all $m\geq M$. Hence using the integral inequality \eqref{ch43}, we can infer that there exists a small time $T^*\in (0,T),$ independent of $\delta$ and $m,$ such that
\begin{equation*}
\underset{0\leq t\leq T^*}{\sup}\psi_{m}(t)\leq C\exp (C \mathcal{C}(\rho_0,\uu_0^m))\quad \mbox{for all} \quad m\geq M.
\end{equation*} 
In the meantime, using \eqref{ch-37} together with this uniform bound on the function $\psi_m$, we can also derive the analogues of \eqref{ch-32} and \eqref{33-ch} for each bound $m\geq M:$
\begin{equation}\label{44-ch}
\begin{split}
&\underset{0\leq t\leq T^*}{\sup}\big(\|\sqrt{\rho_m}\del_t \uu_m \|_{L^{2}(\Omega)}+\|\nabla \uu_m\|_{H^{1}(\Omega)}+\|p_m\|_{W^{1,6}(\Omega)}\big)\\
\quad &+\int_{0}^{T^{*}}\big(\|\uu_m\|_{W^{2,6}(\Omega)}^{2}+\|\del_t \nabla \uu_m \|_{L^{2}(\Omega)}^{2}+\|\del_t \uu_{m} \|_{L^{6}(\Omega)}^{2}\big)\,dt\leq C\exp (C \mathcal{C}(\rho_0,\uu_0^m)). 
\end{split}
\end{equation}
Since $\mathcal{C}(\rho_0,\uu_0^m)\leq \|g\|_{L^{2}(\Omega)}^{2}+1$ for all $m\geq M,$ we can deduce from \eqref{44-ch} that the sequence $(\rho_m,\uu_m)$ converges, up to the extraction of subsequences, to some limit $(\rho,\uu)$ in the obvious weak sense, that  is 
\begin{equation*}
\begin{split}
&\rho_m \rightharpoonup^* \rho_\delta \;\;{\rm in}\;\; L^{\infty}(0,T^*; W^{1,6}(\Omega))\quad\;\;\uu_m\rightharpoonup^* \uu_\delta\;\;{\rm in}\;\; L^{\infty}(0,T^*; H^{1}(\Omega))\\
&\uu_m\rightharpoonup \uu_\delta\;\;{\rm in}\;\; L^{2}(0,T^*; W^{2,6}(\Omega))\quad \;\;(\uu_m)_t\rightharpoonup (\uu_\delta)_t \;\;{\rm in}\;\; L^{2}(0,T^*; H^{1}(\Omega))
\end{split}
\end{equation*}
These convergence properties are sufficient to pass to the limit in $m$. Let us just give some details about the convergence of the extra term that we have compared to \cite{Kim-SNS}. Indeed, denote by 
\begin{equation}\label{def-F}
F_\delta^q(A)=(|A|^2 +\delta^2)^{\frac{q-2}{2}} A \qquad \quad A\in \mathbb{R}^{d\times d}.
\end{equation}
Observe that due to \eqref{44-ch}, we know that
\[
|F_\delta^q (\D(\uu_m(x)))|\leq C \quad \mbox{for all}\quad x\in \Omega.
\]
On the other hand, we have
$$\D(\uu_m)\rightarrow \D(\uu_\delta) \quad\mbox{ a.e. in} \quad (0,T)\times \Omega.$$
Thus, by the continuity of $F_\delta$, we get
\[
F_\delta^q (\D(\uu_m)) \rightarrow F_\delta^q (\D(\uu_\delta))  \quad\mbox{ a.e. in} \quad (0,T)\times \Omega.
\]
Owing to the Vitali's convergence theorem, the passage to the limit in $m$ is now complete.  Furthermore, by the lower semi-continuity of norm, the solution satisfies the following estimate
\begin{equation}\label{45=ch}
\begin{split}
&\underset{0\leq t\leq T^*}{\sup} \big(\|\sqrt{\rho}\del_t \uu_\delta \|_{L^{2}(\Omega)}+\|\nabla \uu_\delta\|_{H^{1}(\Omega)}+\|p_\delta\|_{W^{1,6}(\Omega)}\big)\\
&\quad +\int_0^{T^*} \big(\|\uu_\delta\|_{W^{2,6}(\Omega)}+\|\del_t \nabla \uu_\delta \|_{L^{2}(\Omega)}^{2}+\|\del_t \uu_\delta \|_{L^{6}(\Omega)}^{2}\big)\,dt\leq C.
\end{split}
\end{equation}

So far, we have proved that there exist $(\rho_\delta,\uu_\delta)$ that solves \eqref{Power-Law}-\eqref{Sdelta-PL} with smooth initial data and initial density far from zero. We want now to extend the above existence's result to the case where the initial data satisfies the hypothesis of Theorem \ref{theo1}. Indeed, take $\rho_0^\delta, \uu_0^\delta$ and $f$ as in Theorem \ref{theo1}. For each $\varepsilon\in (0,1)$, we choose $\rho_0^\varepsilon\in C^1(\overline{\Omega})$ such that 
\[
\rho_0^\varepsilon(x)\geq \varepsilon \;\; \forall x\in \Omega \qquad \mbox{and}\quad \rho_0^\varepsilon \rightarrow \rho_0^\delta \quad \mbox{in}\quad W^{1,6}(\Omega).
\] 
Set $\uu_0^\varepsilon\in W^{2,6}$ be the unique solution to the following system 
\[
-\divv\Big(2\mu \D(\uu_{0}^\varepsilon)+\lambda\divv \uu_{0}^\varepsilon\,\mathbb{I}+\tau^* (|\D(\uu_{0}^\varepsilon)|^2 +\delta^2)^{\frac{q-2}{2}} \D(\uu_{0}^\varepsilon)\Big)=\sqrt{\rho_{0}^\varepsilon}g-a\nabla (\rho_{0}^\varepsilon)^\gamma.
\]
Such a solution surely exists due to Section \ref{Sec-NES}, Proposition \ref{prop-reg}. Finally, we denote by $(\rho^\varepsilon, \uu^\varepsilon)$ the local strong solution in $[0, T^*]$ to \eqref{Power-Law}-\eqref{Sdelta-PL} with initial data replaced by $(\rho_0^\varepsilon, \uu_0^\varepsilon)$. Notice that $T^*$ is independent of $\varepsilon$ due to the previous arguments.   Next, since $\uu^\varepsilon_0 \rightharpoonup \uu_0^\delta $ in $H^{2}(\Omega)$, and $C(\rho_0^\varepsilon, \uu_0^\varepsilon)=\| g\|_{L^2(\Omega)}^2<+\infty,$  and $(\rho^\varepsilon, \uu^\varepsilon)$ satisfies the uniform bound \eqref{45=ch}, the same arguments employed previously show that a subsequence of approximate solutions converges to a strong solution  to the original problem, which satisfies bound \eqref{45=ch} too. This completes the proof of Theorem \ref{theo1} except the uniqueness
assertion.

\subsection{Uniqueness of solution}
In this subsection we prove the uniqueness of solutions to system \eqref{Power-Law}-\eqref{Sdelta-PL}. Our proof is inspired of the method used in \cite{Des-SNS, Kim-SNS} in case of a Newtonian fluid, however, we shall use tools employed for incompressible non-Newtonian systems to deal with the nonlinear viscosity term.

Let us assume that there exist two solutions $(\rho, \uu)$ and $(\bar{\rho}, \bar{\uu})$ (we drop the index $\delta$ for simplicity) to system \eqref{Power-Law}-\eqref{Sdelta-PL} with the same initial data $(\rho_0, \uu_0).$ 
For the sake of simplicity, let us denote by
$$\vartheta=\rho -\bar{\rho}\quad z=\uu-\bar{\uu} \quad  \pi=p-\bar{p}.$$
We can easily verify that triple $(\vartheta,z,\pi)$ satisfies the following equation
\begin{equation}\label{eq-diff}
\begin{split}
\rho z_t+\rho \uu\cdot \nabla z-\divv(\Ss_\delta-\bar{\Ss}_\delta)
 &=\vartheta(f-\bar{\uu}_t-\bar{\uu}\cdot\nabla \bar{\uu})-\rho z\cdot\nabla \bar{\uu}-\nabla \pi\\
\quad &=\vartheta h-\rho z\cdot\nabla \bar{\uu}-\nabla \pi
\end{split}
\end{equation}
where $\Ss_\delta$ is defined in \eqref{Sdelta-PL} and $h:=f-\bar{\uu}_t-\bar{\uu}\cdot\nabla \bar{\uu}\in L^{2}(0,T; L^{6}(\Omega)).$
Now, multiplying equation \eqref{eq-diff} by $z$ and integrating over $\Omega$, we obtain
\begin{align}\label{diffe}
\begin{split}
\dfrac{1}{2}\Dt \intO &\rho |z|^2\,dx+\intO \Ss_\delta-\bar{\Ss}_\delta: \nabla  z \,dx\\
&\leq  \intO |\vartheta||h||z|\,dx +\intO \rho |z|^2|\nabla \bar{u}|\,dx+\intO \pi \divv z\,dx.
\end{split}
\end{align}
According to the definition of $\Ss_\delta$ given in \eqref{Sdelta-PL} we have
\begin{align*}
  \intO \Ss_\delta-\bar{\Ss}_\delta: \nabla  z \,dx = &\mu \intO |\nabla z|^2\,dx +(\lambda +\mu)\intO |\divv z|^2\,dx \\
 & + \tau^* \intO \big[(|\D(\uu)|^2 +\delta^2)^{\frac{p-2}{2}}\D(\uu)-(|\D(\bar{\uu})|^2 +\delta^2)^{\frac{p-2}{2}}\D(\bar{\uu})\big] : \nabla z\,dx.
\end{align*}
Let us treat the last integral in the above equation. Indeed, we define $\Phi: \mathbb{R}^{n\times n} \rightarrow \mathbb{R}^{+}$ as follows
\[
\Phi(A)=\frac{1}{q}(|A|^2+\delta^2)^{\frac{q}{2}} \quad \mbox{for }\quad A\in \mathbb{R}^{n\times n}.
\]
Therefore, for any two matrices $C, D$ in $\RR^{n\times n},$ we compute
\begin{align}\label{diff-CD}
\begin{split}
(|C_{ij}|^2+\delta^2)^{\frac{q-2}{2}}C_{ij}-(|D_{ij}|^2+\delta^2)^{\frac{q-2}{2}}D_{ij}&=\del_{ij}\Phi (C) -\del_{ij}\Phi(D)\\
&=\int_0^1 \dfrac{d}{ds}\Big(\del_{ij}\Phi(D+s(C-D))\Big)ds\\
&=\int_0^1 \underset{kl}{\sum} \del_{kl}\del_{ij}\Phi(D+s(C-D)) (C_{kl}-D_{kl})\,ds,
\end{split}
\end{align}
where the symbol $\del_{ij}$ represents a partial derivative with respect to the $(i,j)-$component of the underlying space of $n\times n-$matrices. Thus, having at hand
\[
\del_{ij}\Phi(A)=(|A|^2+\delta^2)^{\frac{q-2}{2}} A_{ij}
\]
\[ \del_{kl}\del_{ij}\Phi(A)=(|A|^2+\delta^2)^{\frac{q-2}{2}}\delta_{ij}^{kl}+(q-2)(|A|^2+\delta^2)^{\frac{q-4}{2}}A_{kl}A_{ij}
\]
we deduce from \eqref{diff-CD} the following
\begin{align*}
\underset{ij}{\sum}&\big((|C_{ij}|^2+\delta^2)^{\frac{q-2}{2}}C_{ij}-(|D_{ij}|^2+\delta^2)^{\frac{q-2}{2}}D_{ij}\big)(C_{ij}-D_{ij})\\
&=\underset{ijkl}{\sum}\int_0^1\Big[ \big( |D+s(C-D)|^2 +\delta^2 \big)^{\frac{q-2}{2}}|C_{ij}-D_{ij}|^2 \\
&\qquad +(q-2)\big( |D+s(C-D)|^2 +\delta^2 \big)^{\frac{q-4}{2}}(D_{ij}+s(C_{ij}-D_{ij})(D_{kl}+s(C_{kl}-D_{kl}) (C_{ij}-D_{ij})(C_{kl}-D_{kl})\Big]\,ds\\
&=\underset{ijkl}{\sum}\int_0^1\Big[ (|D+s(C-D)|^2 +\delta^2)^{\frac{q-4}{2}}\big((|D+s(C-D)|^2 +\delta^2)|C_{ij}-D_{ij}|^2 \\
&\qquad+(q-2)(D_{ij}+s(C_{ij}-D_{ij})(D_{kl}+s(C_{kl}-D_{kl}) (C_{ij}-D_{ij})(C_{kl}-D_{kl})\big)\Big]\,ds\\
&\geq \underset{ijkl}{\sum}\int_0^1\Big[ (|D+s(C-D)|^2 +\delta^2)^{\frac{q-4}{2}}\Big(\min(1,(q-1))|D+s(C-D)|^2  |C_{ij}-D_{ij}|^2+\delta^2 |C_{ij}-D_{ij}|^2\Big) \Big]\,ds.
\end{align*}
Finally, we conclude that
\[
  \intO \Ss_\delta-\bar{\Ss}_\delta: \nabla  z \,dx \geq \mu \intO |\nabla z|^2\,dx +(\lambda +\mu)\intO |\divv z|^2\,dx
\]
Next, using H\"{o}lder and Sobolev inequalities, we deduce from  \eqref{diffe} and the above estimate
\begin{align*}
\dfrac{1}{2}\Dt \|\sqrt{\rho}& z\|^2_{L^2(\Omega)}+
\mu \|\nabla z \|_{L^2(\Omega)}^2 \\\
&\leq \|\vartheta\|_{L^{3/2}(\Omega)}\|h\|_{L^6(\Omega)}\|z\|_{L^6(\Omega)}+\|\nabla \bar{\uu}\|_{L^{\infty}(\Omega)}\|\sqrt{\rho}z\|_{L^2(\Omega)}^2 +\|\pi \|_{L^2(\Omega)}\|\nabla z\|_{L^2(\Omega)}\\
&\leq \dfrac{\mu}{2}\|\nabla z\|_{L^2(\Omega)}^2+C \|h\|_{L^6(\Omega)}^2\|\vartheta\|_{L^{3/2}(\Omega)}^2+ \|\nabla \bar{\uu}\|_{L^\infty}\|\sqrt{\rho}z\|_{L^2(\Omega)}^2+C\|\pi\|_{L^2(\Omega)}^2.
\end{align*}
Hence, we conclude with
\begin{align}\label{eq46}
\Dt \|\sqrt{\rho}z\|_{L^2}^2+\mu \|\nabla z\|_{L^2}^2\leq M(t) \big( \|\sqrt{\rho}z\|_{L^2}^2+\|\vartheta\|_{L^{3/2}}^2+\|\pi\|_{L^2}^2\big) 
\end{align}
for some non-negative function $M(t)\in L^1(0,T).$ Meanwhile, using mass conservation equation, we can write
\begin{equation*}
\vartheta_t +\nabla \vartheta \cdot u +\nabla \bar{\rho}\cdot z+\vartheta \divv \bar{u} +\rho \divv z=0,
\end{equation*}
and then, we can deduce that
\begin{align*}
|\vartheta_t|+\nabla |\vartheta|\cdot z+\nabla |\vartheta|\cdot \bar{\uu}\leq |\nabla \bar{\rho}||z|+|\vartheta||\divv \bar{\uu}|+\rho |\divv z|.
\end{align*}
Multiplying the above inequality by $|\vartheta|^{1/2}$ and integrate over $\Omega$, we get
\begin{align*}
\Dt \intO |\vartheta|^{3/2}\, dx&\leq C \intO \big[\big(|\vartheta|+\rho\big)|\vartheta|^{1/2}|\divv z|+|\vartheta|^{3/2}|\divv \bar{\uu}|+|\vartheta|^{1/2}|\nabla \bar{\rho}||z|\big]\,dx\\
&\leq C \big(\|\vartheta\|_{L^6}+\|\rho\|_{L^6}+\|\nabla \bar{\rho}\|_{L^2}\big)\|\vartheta\|_{L^{3/2}}^{1/2}\|\nabla z\|_{L^2}+\|\nabla \bar{\uu}\|_{L^{\infty}}\|\vartheta\|_{L^{3/2}}^{3/2}
\end{align*}
whence
\begin{align}\label{eq47}
\begin{split}
\Dt \|\vartheta\|_{L^{3/2}}^{2}&=\dfrac{4}{3} \|\vartheta\|_{L^{3/2}}^{1/2}\Dt \|\vartheta\|_{L^{3/2}}^{3/2}\\
& \leq C\big(\|\vartheta\|_{L^6}+\|\rho\|_{L^6}^2+\|\nabla \bar{\rho}\|_{L^2}^2\big)\|\vartheta\|_{L^{3/2}}^{2}+\dfrac{\mu}{4}\|\nabla z\|_{L^2}^2 +\|\nabla \bar{\uu}\|_{L^\infty}\|\vartheta\|_{L^{3/2}}^{2}\\
& \leq \dfrac{\mu}{4} \|\nabla z\|_{L^2}^2 +N(t) \|\vartheta\|_{L^{3/2}}^{2}
\end{split}
\end{align}
for some  nonnegative $N(t)\in L^1(0,T)$. Finally, remark that we have
\begin{equation}
\pi_t +\nabla \pi\cdot \uu+\nabla \bar{p} \cdot z+\gamma \pi\divv \bar{\uu}+\gamma p\divv z=0.
\end{equation}
Multiplying the above equality by $\pi$ and doing the same treatment as before, we infer with
\begin{align}\label{eq48}
\begin{split}
\Dt \|\pi\|_{L^2}^2&\leq C\big( \|\pi\|_{L^{\infty}(\Omega)}+\|p\|_{L^{\infty}(\Omega)}+\|\nabla \bar{p}\|_{L^{3}(\Omega)}\big)\|\pi\|_{L^2(\Omega)}\|\nabla z\|_{L^2(\Omega)}+C\|\nabla \bar{\uu}\|_{L^{\infty}(\Omega)} \|\pi\|_{L^2(\Omega}^2\\
&\leq \dfrac{\mu}{4}\|\nabla z\|_{L^2}^2 +K(t) \|\pi\|_{L^2}^2
\end{split}
\end{align}
for some nonnegative $K(t)\in L^1(0,T).$
 Combining all estimates \eqref{eq46} -\eqref{eq47}-\eqref{eq48}, we conclude with
\begin{align*}
\dfrac{\mu}{2}\|\nabla z\|_{L^2}^2 &+\Dt \big( \|\sqrt{\rho} z\|_{L^2}^2 +\|\vartheta\|_{L^{3/2}}^2 +\|\pi\|_{L^2}^2\big)\\
&\leq \big (M(t)+N(t)+K(t)\big)\big( \|\sqrt{\rho} z\|_{L^2}^2 +\|\vartheta\|_{L^{3/2}}^2 +\|\pi\|_{L^2}^2\big).
\end{align*}
By applying a Gronwall's argument, we finish the proof of uniqueness of solution.
\begin{remark}
Following the same arguments developed firstly in \cite{Des-SNS} and later in \cite{Kim-SNS}, we can prove a more general uniqueness result by assuming less regularity on $(\rho, \uu)$ and $(\bar{\rho}, \bar{\uu})$.
\end{remark}
\section{Well-posedness of Bingham system: Proof of Theorem \ref{theo-2}}\label{Sec-theo2}
In this section, we prove Theorem \ref{theo-2}, which is devoted to show the local existence of a unique strong solution to system \eqref{Bing-sys}-\eqref{S-bing}.  In the first subsection, we show the construction of solutions based on our result established in the last section. In the last subsection, we prove the uniqueness issue.
\subsection{Construction of solutions}
In this proof, we shall use our result established in Section \ref{Sec-The1}. For $(\rho_0,\uu_0)$ as in Theorem \ref{theo-2}, we consider the regularizing problem on the initial function $\uu_0(x)$
\begin{align}\label{com-bing-cons}
\begin{split}
-\del_x\big( \Ss_0^\delta\big)&=\sqrt{\rho_0} g-a\del_x\rho_0^\gamma,\vspace*{0.1cm}\\
\Ss_0^\delta:&= \mu \del_x \uu_0^\delta +\tau^*\dfrac{\del_x \uu_0^\delta}{\sqrt{|\del_x \uu_0^\delta |^2 +\delta^2}},
\end{split}
\end{align}
with $\delta\leq 1$ and $g\in L^6(\Omega)$. Again such $\uu_0^\delta$ solution to the above equation surely exists, and, moreover, due to the first statement in Proposition \ref{prop-reg} we have $\uu_0^\delta \in W^{2,6}(\Omega)$ uniformly with respect to $\delta$.

Next, for $(\rho_0, \uu_0^\delta)$ such that $\rho_0$ satisfies the hypothesis of Theorem \ref{theo-2}, and $\uu_0^\delta$ in $W^{2,6}$ solution of \eqref{com-bing-cons}, we know that there exists $(\rho_\delta, \uu_\delta)$ solution to the following problem
\begin{equation}\label{Reg-Bing}
\begin{array}{ccc}
\del_{t}\rho_\delta+\del_x(\rho_\delta \uu_\delta)=0\vspace*{0.2cm}\\
\del_{t}(\rho_\delta \uu_\delta)+\del_x(\rho_\delta \uu_\delta^2)-\del_x\big(\mu \del_x \uu_\delta +\tau^*(|\del_x \uu_\delta |^2 +\delta^2)^{\frac{-1}{2}}\del_x \uu_\delta\big)  +\del_x p_\delta=\rho_\delta f
\end{array}
\end{equation}
with initial data $(\rho_0, \uu_0^\delta)$ provided that the compatibility condition \eqref{com-bing-cons} holds. This is an immediate application of Theorem \ref{theo1} in one-dimensional space and for $q=1$ in \eqref{Sdelta-PL}. Notice that the essential point here is that the $W^{2,p}-$estimates on the elliptic equation hold uniformly with respect to $\delta$. This means, as indicated before, that all the estimates shown on $(\rho_\delta, \uu_\delta)$ in the last section hold uniformly with respect to $\delta$. Accordingly, we have
\begin{equation}\label{est-bing}
\begin{split}
&\underset{0\leq t\leq T^*}{\sup} \big(\|\sqrt{\rho}\del_t \uu_\delta \|_{L^{2}(\Omega)}+\|\del_x \uu_\delta\|_{H^{1}(\Omega)}+\|p_\delta\|_{W^{1,6}(\Omega)}\big)\\
&\quad +\int_0^{T^*} \big(\|\uu_\delta\|_{W^{2,6}(\Omega)}+\|\del_t \del_x \uu_\delta \|_{L^{2}(\Omega)}^{2}+\|\del_t \uu_\delta \|_{L^{6}(\Omega)}^{2}\big)\,dt\leq C,
\end{split}
\end{equation}
where $C$ is a constant that depend on the initial data, however, it does not depend on $\delta$. Now, we move to prove that, up to a subsequence, the solution $(\rho_\delta, \uu_\delta)$ of system \eqref{Reg-Bing}  with initial data $(\rho_0,\uu_0^\delta)$ converges to a solution denoted $(\rho,\uu)$ of \eqref{Bing-sys}-\eqref{S-bing} as $\delta$ tends to zero with initial data $(\rho_0, \uu_0)$. Repeatedly, the main task here is to study the passage to the limit in the viscous stress tensor. Precisely, we want to verify that, at the limit, the pair $(\rho_0, \uu_0)$ satisfies \eqref{S0-comp}. Indeed, for the first part of \eqref{S0-comp}, we remark that the function
$$j(s)=\mu s +\tau^*\dfrac{s}{\sqrt{s^2+\delta^2}},$$
is continuous in $\RR^*$, the norms $\|\Ss_0^\delta\|_{L^p(\Omega)}$ are uniformly bounded in $\delta$, and so by Vitali's Theorem, we get $\Ss_0=j(\del_x \uu)$ for $\del_x \uu(x)\neq 0.$ 

Let us show the second part of \eqref{S0-comp}. As in \cite{Ba-Sh-strong1d}, we argue by contradiction. Set
\[
A= (0,T)\times \Omega \cap \{ \del_x \uu=0\}\cap \{|\Ss_0|\geq \tau^*\},
\]
and assume that ${\rm meas}(A)=m>0$. Then $\psi=\sign (\Ss_0) \cdot\chi_A \in L^{\infty}((0,T)\times \Omega)$ and 
\[
\intT\intO \Ss_0 \psi \,dx\,dt> \tau^* m.
\]
Thus, given a small $\varepsilon>0,$ we have
\[
I^\delta =\intT\intO \Ss_0^\delta \psi \,dx\,dt\geq \Ss_0 m+\varepsilon
\]
for small $\delta$. On the other hand, by the choice of $j$, the inequality $|j(s)|\leq \Ss_0 +\varepsilon/3m$ is valid for small $\delta$ and $|s|<s_0,$ where $s_0$ is small enough. We estimate the integral $I^\delta$. We have
\[
\intT\intO \Ss_0^\delta \psi \,dx\,dt=\intT\int_{|\del_x \uu^\delta|<s_0}\ldots +\intT\int_{|\del_x \uu^\delta|\geq s_0}\ldots=I_1^\delta+I_2^\delta.
\]
By H\"older inequality, we have
\[
|I_2^\delta|\leq \| \Ss_0^\delta\|_{L^2((0,T)\times \Omega)}\; {\rm meas} \big( A\cap \{x; |\del_x\uu^\delta|\geq s_0 \}\big)^{1/2}.
\]
Due to estimate \eqref{est-bing}, we deduce that $|I_2^\delta|\leq \varepsilon/3$ for some small $\delta$. Since $|I_2^\delta|\leq \Ss_0 m+\varepsilon/3$, we arrive at a contradiction.  The proof of Theorem \ref{theo-2} is now finished excluding the uniqueness assertion which will be shown below.

\subsection{Uniqueness of solution}
The proof of uniqueness of solutions to Bingham system \eqref{Bing-sys}-\eqref{S-bing} follows the same lines as in the previous proof of uniqueness of solutions to Power Law system \eqref{Power-Law}-\eqref{Sdelta-PL}. Indeed, we need only to verify that 
\begin{equation}\label{uniq-est}
\intO \Ss-\bar{\Ss}:\del_x z\,dx\geq c\; \|\del_x z\|_{L^2(\Omega)}^2,
\end{equation}
where $\Ss$ is defined in \eqref{S-bing}. Indeed, splitting the domain $\Omega$ into four subdomains
\begin{align*}
&\Omega_1:=\{x\in \Omega \quad {\rm s.t.}\quad \del_x \uu\neq 0 \quad \del_x \bar{\uu}\neq 0\}\quad \quad \Omega_2:=\{x\in \Omega \quad {\rm s.t.}\quad \del_x\uu\neq 0 \quad \del_x\bar{\uu}= 0\}\\
&\Omega_3:=\{x\in \Omega \quad {\rm s.t.}\quad \del_x\uu= 0 \quad \del_x\bar{\uu}\neq 0\}\quad \quad \Omega_4:=\{x\in \Omega \quad {\rm s.t.}\quad \del_x \uu= 0 \quad \del_x\bar{\uu}= 0\},
\end{align*}
we can check easily that \eqref{uniq-est}  holds. This finishes the proof.
\section{Appendix}\label{Appen-sec}
This section is devoted to perform the estimates of each integral term $I_k$ presented in \eqref{26ch}, Section \ref{Sec-The1}. Indeed, 
using Sobolev, H\"older and Young inequalities, we estimate
\begin{multline*}
\begin{split}
&\bullet I_1 \leq 2\|\rho\|_{L^{\infty}(\Omega)}^{1/2}\|\uu\|_{L^{6}(\Omega)}\|\sqrt{\rho}\uu_t\|_{L^3(\Omega)}\|\nabla\uu_t\|_{L^{2}(\Omega)}\\
 &\hspace*{0.65cm} \leq 2\|\rho\|_{L^{\infty}(\Omega)}^{1/2}\|\uu\|_{L^{6}(\Omega)} \|\sqrt{\rho}\uu_t\|_{L^{2}(\Omega)}^{1/2}\|\sqrt{\rho}\uu_t\|_{L^{6}(\Omega)}^{1/2}\|\nabla\uu_t\|_{L^{2}(\Omega)}\\
&\hspace*{0.65cm} \leq C |\rho\|_{L^{\infty}(\Omega)}^{3/4}\|\nabla \uu\|_{L^{2}(\Omega)} \|\sqrt{\rho}\uu_t\|_{L^{2}(\Omega)}^{1/2}\|\nabla\uu_t\|_{L^{2}(\Omega)}^{3/2}\leq C \psi^6+\dfrac{\mu}{16}\|\nabla \uu\|_{L^2(\Omega)}^2,
\end{split}
\end{multline*}
\begin{multline*}
\begin{split}
&\bullet I_2\leq  \|\rho\|_{L^{\infty}(\Omega)}\|\uu\|_{L^6(\Omega)} \|\uu_t\|_{L^6(\Omega)}\|\nabla \uu\|_{L^{3}(\Omega)}^2\\
&\hspace*{0.65cm} \leq C \|\rho\|_{L^{\infty}(\Omega)}\|\nabla \uu\|_{L^2(\Omega)} \|\nabla \uu_t\|_{L^2(\Omega)}\|\nabla \uu\|_{L^{2}(\Omega)}\|\nabla \uu\|_{H^{1}(\Omega)}\leq C\psi^{11}+\dfrac{\mu}{16}\|\nabla \uu_t\|_{L^{2}(\Omega)}^2,
\end{split}
\end{multline*}
\begin{multline*}
\begin{split}
&\bullet I_3\leq \|\rho\|_{L^{\infty}(\Omega)}\|\uu\|_{L^6(\Omega)}^2\|\uu_t\|_{L^6(\Omega)} \|\nabla^2 \uu\|_{L^2(\Omega)}\\
&\hspace*{0.65cm} \leq C \|\rho\|_{L^{\infty}(\Omega)}\|\nabla \uu\|_{L^2(\Omega)}^2 \|\nabla \uu_t\|_{L^2(\Omega)}\|\nabla \uu\|_{H^1(\Omega)}\leq C \psi^{11} +\dfrac{\mu}{16}\|\nabla \uu_t\|_{L^2(\Omega)}^2,
\end{split}
\end{multline*}
\begin{multline*}
\begin{split}
 &\bullet I_4\leq \|\rho\|_{L^{\infty}(\Omega)}\|\uu\|_{L^6(\Omega)}^2 \|\nabla \uu\|_{L^6(\Omega)}\|\nabla \uu_t\|_{L^2(\Omega)}\\
&\hspace*{0.65cm} \leq C \|\rho\|_{L^{\infty}(\Omega)}\|\nabla \uu\|_{L^2(\Omega)}^2 \|\nabla \uu\|_{H^1(\Omega)}\|\nabla \uu_t\|_{L^2(\Omega)}\leq C \psi^{11}+\dfrac{\mu}{16}\|\nabla \uu_t\|_{L^2(\Omega)}^2,
\end{split}
\end{multline*}
\begin{multline*}
\begin{split}
&\bullet I_5\leq \|\rho\|_{L^{\infty}(\Omega)}^{1/2}\|\uu_t\|_{L^6(\Omega)}\|\sqrt{\rho}\uu_t\|_{L^3(\Omega)} \|\nabla \uu\|_{L^2(\Omega)}\\
&\hspace*{0.65cm} \leq C \|\rho\|_{L^{\infty}(\Omega)}^{1/2}\| \uu_t\|_{L^6(\Omega)}\|\sqrt{\rho}\uu_t\|_{L^2(\Omega)}^{1/2} \|\sqrt{\rho}\uu_t\|_{L^6(\Omega)}^{1/2}\|\nabla \uu\|_{L^2(\Omega)}\\
&\hspace*{0.65cm} \leq C \|\rho\|_{L^{\infty}(\Omega)}^{3/4}\| \nabla\uu\|_{L^2(\Omega)}\|\sqrt{\rho}\uu_t\|_{L^2(\Omega)}^{1/2} \|\nabla \uu_t\|_{L^2(\Omega)}^{3/2}
\leq C \psi^{6}+\dfrac{\mu}{16}\|\nabla \uu_t\|_{L^2(\Omega)}^2
\end{split}
\end{multline*}
\begin{multline*}
\begin{split}
&\bullet I_6\leq \|\nabla p\|_{L^3(\Omega)}\|\uu\|_{L^6(\Omega)}\|\nabla \uu_t\|_{L^2(\Omega)}\leq C \psi^3 +\dfrac{\mu}{16}\|\nabla \uu_t\|_{L^2(\Omega)}^2
\end{split}
\end{multline*}
\begin{multline*}
\begin{split}
&\bullet I_7\leq \gamma \| p\|_{L^\infty(\Omega)}\|\uu\|_{L^6(\Omega)}\|\nabla \uu\|_{L^3(\Omega)} \|\nabla^2 \uu\|_{L^2(\Omega)}\\
&\hspace*{0.65cm} \leq C \|p\|_{L^\infty(\Omega)} \|\nabla \uu\|_{L^2(\Omega)}^{3/2}\|\nabla \uu\|_{H^1(\Omega)}^{3/2}\leq C \psi^7.
\end{split}
\end{multline*}
\begin{multline*}
\begin{split}
&\bullet I_8\leq \gamma^2 \|p\|_{L^\infty(\Omega)}\|\nabla\uu\|_{L^3(\Omega)}^3\\
&\hspace*{0.65cm} \leq C \|p\|_{L^\infty(\Omega)}\|\nabla \uu\|_{L^2(\Omega)}^{3/2}\|\nabla \uu\|_{H^1(\Omega)}^{3/2}\leq C\psi^7
\end{split}
\end{multline*}
\begin{multline*}
\begin{split}
&\bullet I_9\leq \|\rho\|_{L^\infty(\Omega)}^{1/2}\|\uu\|_{L^6(\Omega)}\|\sqrt{\rho}\uu_t\|_{L^3(\Omega)}\|\nabla f\|_{L^2(\Omega)}\\
&\hspace*{0.65cm}\leq \|\rho\|_{L^\infty(\Omega)}^{1/2}\|\uu\|_{L^6(\Omega)}\|\sqrt{\rho}\uu_t\|_{L^2(\Omega)}^{1/2}\|\sqrt{\rho}\uu_t\|_{L^6(\Omega)}^{1/2}\|\nabla f\|_{L^2(\Omega)}\\
&\hspace*{0.65cm} \leq C \|\rho\|_{L^\infty(\Omega)}^{3/4}\|\nabla \uu\|_{L^2(\Omega)}\|\sqrt{\rho}\uu_t\|_{L^2(\Omega)}^{1/2}\|\nabla\uu_t\|_{L^2(\Omega)}^{1/2}\|\nabla f\|_{L^2(\Omega)}\leq C\psi^6 +C\|\nabla f\|_{L^2(\Omega)}^2 +\dfrac{\mu}{16}\|\nabla \uu_t\|_{L^2(\Omega)}^2\vspace*{0.2cm}
\end{split}
\end{multline*}
\begin{multline*}
\begin{split}
&\bullet I_{10}\leq \|\rho\|_{L^\infty(\Omega)}\|\uu\|_{L^6(\Omega)}\|f\|_{L^3(\Omega)}\|\nabla \uu_t\|_{L^2(\Omega)}\\
&\hspace*{0.7cm} \leq C \|\rho\|_{L^\infty(\Omega)}\|\nabla \uu\|_{L^2(\Omega)}\|f\|_{L^2(\Omega)}^{1/2}\|f\|_{H^1(\Omega)}^{1/2}\|\nabla \uu_t\|_{L^2(\Omega)} \leq C\psi^6 +C \|f\|_{H^1(\Omega)}^2 +\dfrac{\mu}{16}\|\nabla \uu_t\|_{L^2(\Omega)}^2
\end{split}
\end{multline*}
\begin{multline*}
\begin{split}
&\bullet I_{11}\leq \|\rho\|_{L^{\infty}(\Omega)}^{1/2}\|\sqrt{\rho}\uu_t\|_{L^2(\Omega)}\|f_t\|_{L^2(\Omega)}\leq C \psi^2 +C \|f_t\|_{L^2(\Omega)}^2.
\end{split}
\end{multline*}
\medskip
\hfill\break
{\bf Acknowledgment.} This work was suggested to me by Didier Bresch for him I express my sincere gratitude. The author would like also to thanks Matthias Hieber for various insightful discussions about maximal regularity theory for nonlinear parabolic equations.
\bibliographystyle{abbrv}
\bibliography{Main_Bingham}

\end{document}